\theoremstyle{definition}
\newtheorem{defn}{Definition}
\newtheorem{lm}{Lemma}
\newtheorem{thrm}{Theorem}
\begin{document}
\title{\textbf{Interdicting Restructuring Networks with Applications in Illicit Trafficking}\footnotetext[0]{Email addresses: d.kosmas@northeastern.edu (Daniel Kosmas), tcshark@clemson.edu (Thomas C. Sharkey), mitchj@rpi.edu (John E. Mitchell), k.maass@northeastern.edu (Kayse Lee Maass), mart2114@umn.edu (Lauren Martin)}}
\author[a]{Daniel Kosmas\footnote{Corresponding author (d.kosmas@northeastern.edu).}}
\author[b]{Thomas C. Sharkey} 
\author[c]{John E. Mitchell}
\author[a]{Kayse Lee Maass}
\author[d]{Lauren Martin}
\affil[a]{\footnotesize Department of Mechanical and Industrial Engineering, Northeastern University, 360 Huntington Avenue, 334 SN, Boston, MA 02115, USA}
\affil[b]{\footnotesize Department of Industrial Engineering, Clemson University, Freeman Hall, Clemson University
Clemson, SC 29634-0920, USA}
\affil[c]{\footnotesize Department of Mathematics, Rensselaer Polytechnic Institute, 110 8th Street, Troy, NY 12180, USA}
\affil[d]{\footnotesize School of Nursing, University of Minnesota, 308 SE Harvard St, Minneapolis, MN 55455, USA}
\date{}
\maketitle
\vspace{-1.5cm}
\begin{abstract}
    We consider a new class of max flow network interdiction problems, where the defender is able to introduce new arcs to the network after the attacker has made their interdiction decisions. We prove properties of when this restructuring will not increase the value of the minimum cut, which has important practical interpretations for problems of disrupting drug trafficking networks. In particular, it demonstrates that disrupting lower levels of these networks will not impact their operations when replacing the disrupted participants is easy.  For the bilevel mixed integer linear programming formulation of this problem, we devise a column-and-constraint generation (C\&CG) algorithm to solve it.  Our approach uses \emph{partial information} on the feasibility of restructuring plans and is shown to be orders of magnitude faster than previous C\&CG methods. We demonstrate that applying decisions from standard max flow network interdiction problems can result in significantly higher flows than interdictions that account for the restructuring.\newline \footnotesize Keywords: OR in societal problem analysis; network interdiction; drug trafficking
\end{abstract}
\section{Introduction}
Network interdiction is a two player Stackelberg game \cite{stackelberg} between an interdictor (attacker) and an operator (defender). For the maximum flow network interdiction problem (MFNIP), the defender wishes to maximize the flow through the network, while the attacker attempts to minimize the flow by disrupting nodes and arcs (note that, without loss of generality, we can focus on the disruption of nodes in the network), removing them from the network \cite{orig}. Max flow network interdiction problems model situations where we want to move goods or services through a network, such as telecommunications, electrical power, transportation, and illicit trafficking networks \cite{survey}. Illicit trafficking networks, such as drug trafficking and human trafficking networks, are networks where the product or service being supplied is illegal. In illicit trafficking networks, the interdiction decisions would represent disruptions to the network, with the simplest example being law enforcement arresting participants within the network. It is important to recognize that not all participants in an illicit trafficking network, such as a drug trafficking network, are engaged in criminal activity, and that non-law enforcement based interventions, such as investment in communities and rehabilitation, can also be modeled as interdiction decisions.  Therefore, for the purposes of this paper, the term `interdiction' will be defined to be any disruptive action that alters the nodes and arcs within the illicit trafficking network. 

An important class of network interdiction models, known as defender-attacker-defender models, allows for the defender to protect part of their network before the attacker makes interdiction decisions \cite{DAD, lozano2017backward}. However, these models do not allow for the network to change after the interdiction decisions have been made. Extending these models over a longer time period, once the network has been disrupted, the damage is permanent, and no attempts to repair the network are allowed \cite{multi}. In some applications, such as with illicit trafficking networks, this would contradict our intuition; if an illicit trafficking network has lost participants, and thus lost business, we would expect the network to restructure to compensate for those losses. Incorporating the restructuring decisions of the illicit trafficking network is important to providing appropriate recommendations on how law enforcement and other stakeholders should disrupt the illicit trafficking network.  However, fully incorporating the dynamic back-and-forth between attackers and defenders is computationally difficult \cite{dynamicMFNIP,dynamicSPI}. In this paper, we offer an approach to capture more of the dynamics than traditional MFNIPs which helps to yield important practical insights into the problems of disrupting drug trafficking networks. We consider a variant of MFNIP where the defender is allowed to restructure parts of the network before operating it.

The dynamics of the network are important to consider in disrupting illicit trafficking networks \cite{bright2019illicit}. While MFNIP can be applied to any illicit trafficking network, here we examine the MFNIP in relation to drug trafficking networks. Drug trafficking results in billions of dollars in illicit profits, constituting a fifth of all revenue from organized crime \cite{chandra2020illicit}. It has been observed that when law enforcement targets low-level drug offenders, such as dealers or users, there is little impact on the amount of drugs sold. This is because new participants can be easily recruited to replace those who were incarcerated \cite{prison}. Current network interdiction models for disrupting drug trafficking networks do not account for the ability to replace nodes and arcs. Similarly, Magliocca et al.\ \cite{magliocca2019modeling} developed an agent-based model to predict the response of drug traffickers to interdiction efforts by an individual agent. Our model uses the full operations of the drug trafficking network to plan global disruptions.

We provide a new class of network interdiction problems where the defender is allowed to restructure arcs (without loss of generality, this also models adding new nodes to the network since a node can be modeled as a pair of nodes and an arc) in the network after interdiction decisions have been made. We first describe some properties on how restructuring impacts the optimal flow. These observations yield important insights about how disrupting lower levels of drug trafficking may not impact their operations when replacing the disrupted participants is easy for the network.  We then formulate a bilevel integer program (BIP) to model the problem. We provide a column-and-constraint generation (C\&CG) algorithm to solve this class of problems, and compare the interdiction decisions to those of the standard max flow network interdiction problem. By utilizing partial information from previously explored restructuring decisions, we are able to solve this problem faster than previous C\&CG algorithms.

\subsection{Literature Review}

Network interdiction has been previously applied to the disruption of drug trafficking networks. Malaviya et al.\ \cite{multi} first applied network interdiction models to the disruption of city-level drug trafficking networks and described a framework for generating networks representative of city-level drug trafficking networks. Baycik et al.\ \cite{info} expanded upon this work to include the disruption of drug trafficking networks where not all participants in the network directly handle the drugs. In this work, an additional information network models how bosses direct the flow of drugs in the drug trafficking network. Shen et al.\ \cite{launder} also expands upon this work by also including a `dirty money' network and a money laundering network to help understand the role of money laundering in drug trafficking networks. In these works, the drug trafficking network does not have the ability to react to the disruptions. However, we can reasonably expect that the members of the drug trafficking network will have some sort of reaction, from drug dealers finding new users to sell their drugs to, to the operations of the suppliers being moved to backup locations. To be able to properly disrupt drug trafficking networks, we must make disruption decisions that incorporate likely reactions and actions the networks may make.  We will demonstrate in our computational analysis the importance of accounting for the reaction of the drug trafficking network.  

Uncertainty in max flow network interdiction has been a topic of interest for many years \cite{cormican, janjarassuk}. More recently, Lei et al.\ \cite{stochastic} explored a variant of stochastic MFNIP where the defender is able to add capacity to arcs in the network after the uncertainty around capacities is resolved. They allow for continuous capacity to be added, which does not introduce the challenges of integer decisions in the lower level problem. Borrero et al.\ \cite{borrero2016sequential,borrero2019sequential} explore uncertainty in network structure. Their model does not allow for the underlying network to change, which is the adaptation we introduce and critical for our motivating applications.

To our knowledge, this work is the first to explore the defender adding arcs to the network after interdictions have been made in max flow network interdiction. Our problem is similar to shortest path interdiction with improvement (SPIP-I) introduced by Holzmann and Smith \cite{spipi}, where arcs can be improved after interdiction decisions have been made. In their work, they construct a layered network to solve their problem as a shortest path interdiction problem. Fischetti et al.\ \cite{monotone} further explores interdiction games, where the decisions of the interdictor explicitly restrict the actions of the operator under the assumption of downward monotonicity. In this work, they are able to apply a direct penalization to the objective function value of previous sub-problem solutions based on the current interdiction plan due to the interdiction decisions directly preventing the defender from taking certain actions. In our problem, the interdiction decisions do not restrict the actions of the defender, but instead allow for certain arcs to be added to the network. Further, generating constraints associated only with the objective function value based on each restructuring would not be correct, as the minimum cut may not be the same across different restructuring plans. This prevents their method from being applied to our problem. The interdiction problems studied in Contardo and Sefair \cite{contardo2019progressive} similarly have that the interdiction decisions directly penalize the actions of the defender, and likewise their solution method cannot be applied to our problem. 

Our proposed network interdiction problems fall into the general class of bilevel integer programming problems. In general, bilevel integer programs are $\mathcal{NP}$-Hard problems, and are more difficult to solve than standard optimization problems because of the interactions between the two decision makers. When decisions are restricted to be integers, these problems become harder, because standard methods to solve non-integer counterparts cannot be used. Solution methods to general BIPs have been recently explored \cite{bilevelF, lozano2017value, bilevelT}. BIPs have been solved via C\&CG methods, which separate the inner optimization into two problems, a `middle' problem with just integer decision variables and one with continuous variables. By enumerating over the feasible integer decisions, we can condense the remaining two optimization problems into one by exploiting duality of linear programs. We then solve the optimization problem with a subset of these feasible integer decisions, and iteratively identify new integer decisions to add to the problem. Smith and Song \cite{survey} identify C\&CG as a new method to solve network interdiction problems.

Zeng and An \cite{cncg1} first applied these techniques to problems with the relatively complete response property, where the defender’s problem could always be solved, regardless of the decisions the attacker makes. Intuitively, these methods allow for the attacker to account for how the defender has previously reacted to their decisions, resulting in better informed decisions by the attacker. These methods are improved upon in Yue et al.\ \cite{cncg2}, applying these methods to problems that do not satisfy this property. The intuitive extension with these methods is that only the previous defender decisions that are relevant to the current attacker decision are accounted for. While this starts to exploit information from previous defender decisions, we can gain more information. When we can utilize partial information from the defender decisions that are infeasible with respect to the current interdiction decision, we can gain unique information that may not be accounted for in the previous feasible defender decisions. We extend previous C\&CG methods to utilize partial information from previously identified defender decisions and, in particular, when a defender decision is not fully feasible with respect to the new attacker decision. Utilization of partial information from previous defender decisions can also be implemented in general solvers. The method of Lozano and Smith \cite{lozano2017value} similarly seeks to block previously identified defender decisions, where utilizing partial information would instead only block the infeasible components of previously identified defender decisions.

\subsection{Paper Organization}
The paper is organized as follows: Section \ref{statement} formally introduces the max flow network interdiction problem with restructuring, describes properties on when restructuring will not increase the flow, and interprets these properties in the context of disrupting drug trafficking networks. Section \ref{modeling} discusses how drug trafficking networks can be modeled as max flow network interdiction problems, and how we model when the network is able to restructure. Section \ref{algorithm} derives a column-and-constraint generation algorithm that utilizes partial information from previous iterations. Section \ref{compare} compares this algorithm to other column-and-constraint generation algorithms, and Section \ref{analysis} analyzes the results on drug trafficking networks. Section \ref{conclude} concludes the paper and discusses directions of future work.

\section{Problem Statement and Theoretical Policy Analysis}
\label{statement}
\subsection{Problem Description and a Bilevel Integer Programming Formulation}
The max flow network interdiction problem with restructuring (MFNIP-R) is played over three sets of decisions, one for the attacker then two for the defender. This game is played on a network $G = (N, A)$, where $N$ is the set of nodes, $A$ is the set of arcs initially in the network, and $A^R$ is the set of restructurable arcs, which are arcs that can be added to the network. Nodes and arcs are assigned capacities $u: N \cup A \cup A^R \rightarrow \mathbb{R}_+$. In this work, we consider $A \cap A^R = \emptyset$. Flow is sent along the arcs from a source node $s$ to a sink node $t$. Without loss of generality, we assume there are no arcs into $s$ and out of $t$. First, the attacker chooses a set of nodes to disrupt in the network. Disrupting a node is a binary decision that sets the capacity of that node to $0$. Second, the defender chooses a set of arcs from $A^R$ to restructure, adding those arcs in the network. Restructuring an arc is a binary decision that changes that arc from having $0$ capacity to its specified non-zero capacity. Lastly, the defender maximizes the flow through the restructured network. While we could consider the last two sets of decisions as a single set of decisions since they belong to the same decision-maker working towards the same goal in sequence, by considering them separately, we can develop a decomposition algorithm that leverages strong duality of linear programs. For a background in modeling max flow problems as linear programs, including converting problems with multiple source/sink nodes to single source/sink nodes, see \cite{introduction}.

Figure \ref{fig:sample} displays a sample hierarchical network, which is how city-level drug trafficking networks have previously been modeled. Figure \ref{fig:sampleNet} is the original network, and Figure \ref{fig:sampleRest} is the set of restructurable arcs that may be added to the network. In the context of drug trafficking, the triangle node would be the supplier, the square nodes would be the drug dealers, and the circle nodes would be the drug users. In a max flow model, the supplier would be the source node, and the drug users would be sink nodes.
\begin{figure}[h!]
\centering
\begin{subfigure}{.45\textwidth}
  \centering
  \includegraphics[width=\linewidth]{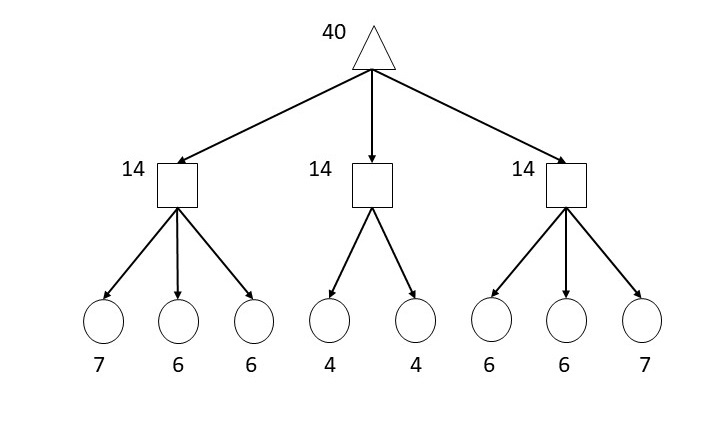}
  \caption{Sample network with node capacities}
  \label{fig:sampleNet}
\end{subfigure}%
\begin{subfigure}{.45\textwidth}
  \centering
  \includegraphics[width=\linewidth]{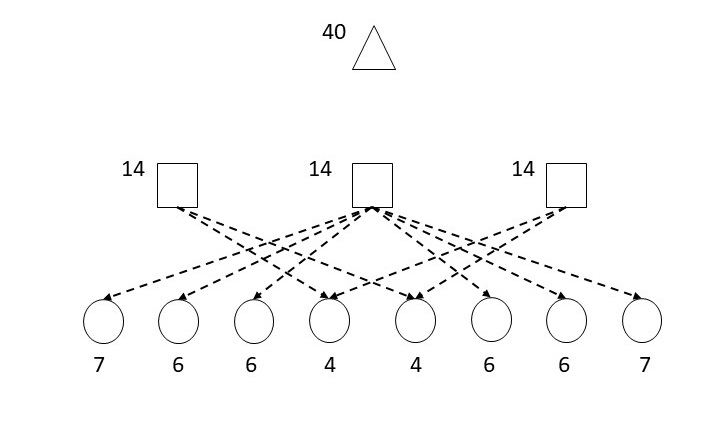}
  \caption{Set of restructurable arcs for sample network}
  \label{fig:sampleRest}
\end{subfigure}
\caption{Example network motivated by city-level drug trafficking networks}
\label{fig:sample}
\end{figure}

We note that, traditionally, max flow network interdiction models disrupt arcs instead of nodes. However, in previous work on disrupting city-level drug trafficking networks \cite{info, multi, launder}, their models disrupt nodes, since the interdiction decisions are modeling law enforcement disrupting the operations of individuals involved in the network, not communications or the passing of drugs between the individuals. Malaviya et al.\ \cite{multi} demonstrates the equivalency between interdicting nodes and interdicting arcs.  Our results can be extended to more general problems that allow for both node and arc interdiction; however, we focus on node interdictions given our motivating application. 

We can model MFNIP-R as a bilevel integer program with integer decisions in both the upper and lower level problems. The upper level (attacker's) problem is the problem of finding the interdiction plan that minimizes the restructured flow, and the lower level (defender's) problem is the problem of finding the restructuring plan and maximum flow in the restructured network. Let $x_{ij}$ be the variable representing the amount of flow on arc $(i,j) \in A$ and $x_i$ be the amount of flow through node $i \in N$, $y_{i}$ be the indicator of whether or not node $i \in N$ has been interdicted, and $z_{ij}$ be the indicator of whether or not arc $(i,j) \in A^R$ has been restructured. More formally,\begin{singlespace}
\begin{equation*}
    y_{i} = \begin{cases} 1 \text{ if node } i \text{ has been interdicted,}\\
    0 \text{ otherwise, }
    \end{cases}
\end{equation*}
\end{singlespace}
and\begin{singlespace}
\begin{equation*}
    z_{ij} = \begin{cases} 1 \text{ if arc } (i,j) \text{ has been restructured,}\\
    0 \text{ otherwise.}
     \end{cases}
\end{equation*}
\end{singlespace}
Let $Y \subseteq \{0, 1\}^{|N|}$ be the set of all feasible interdiction decisions, and $Z(y) \subseteq \{0, 1\}^{|A^R|}$ be the set of all feasible restructuring decisions with respect to interdiction decision $y \in Y$, and assume these sets are finite. The following BIP models the attacker's problem:
\vspace{-8mm}
\begin{singlespace}
\begin{align}
    \label{model1}
    \min_{y \in Y} \max_{x, z} ~~~ & \sum_{(s,i) \in A} x_{si} \nonumber\\  
    \text{s.t.} \hspace{.050cm }& \sum_{(j, i) \in A \cup A^R} x_{ji} -  x_{i} = 0 & \text{ for } i \in N\setminus \{s,t\}\nonumber\\
    & x_i - \sum_{(i, j) \in A \cup A^R} x_{ij} = 0 & \text{ for } i \in N\setminus \{s,t\} \nonumber\\
    & 0 \leq x_{ij} \le u_{ij} & \text{ for } (i,j) \in A\\
    & 0 \leq x_{i} \le u_i (1 - y_{i}) & \text{ for } i \in N\setminus \{s,t\} \nonumber\\
    & 0 \leq x_{ij} \le u_{ij}z_{ij} & \text{ for } (i,j) \in A^R \nonumber\\
    & z \in Z(y) \nonumber 
\end{align}
\end{singlespace}
The objective function is the total amount of flow out of the source node. The first two sets of constraints are flow balance constraints, ensuring the flow into a given node is equivalent to the flow out of that node. The next two sets of constraints are capacity constraints, ensuring that the amount of flow traveling along an arc or node is at most the capacity of that arc or node. We do not define these constraints for the source and sink nodes, as the remaining flow balance constraints will ensure that the flow out of the source will be the same as into the sink, and the source and sink are uncapacititated. The max flow problem can be equivalently modeled as adding an arc from the sink to the source and finding the maximum circulation through the network, in which case these additional flow balance constraints would be necessary. We note that if $z_{ij} = 0$ for all $(i,j) \in A^R$, then the problem reduces to the standard MFNIP. In Section \ref{modeling}, we discuss how to model city-level drug trafficking networks, both in defining the set of arcs and set of restructurable arcs for these networks, and in defining constraints on interdictions and restructurings. Before we do so, we discuss properties of the problem for general networks.

\subsection{Properties of Problem}
We consider how the interdiction decisions can trigger restructuring decisions and how the overall effects of these decisions can impact the max flow in the network. We will show that only certain classes of restructuring decisions can be guaranteed to \emph{not increase} the maximum flow.  This will help shed insight into `bad' disruption strategies for our motivating applications in drug trafficking. We consider the example network in Figure \ref{fig:sample} that helps to illustrate that implementing available interdiction decisions (even when our budget allows it) may not be optimal and, counter-intuitively, increase the flow in the network. This example is based on modeling city-level drug trafficking. For this example, nodes are labeled with $(flow, capacity)$, and current arcs are uncapacitated. Figure \ref{fig:sfig1} demonstrates the maximum flow through this network.

In the example network, we only have the budget to interdict one node in the bottom tier. The set of restructurable arcs includes arcs that start at the left or right node in the middle tier and end at the capacity 4 nodes in the bottom tier, and arcs that start at the middle node in the middle tier and end at any node it does not currently reach. The defender has the budget to restructure any one arc. We assume that only arcs where one endpoint has an interdicted neighbor can be restructured, as we specifically want to model how the impacted individuals react to the disruptions; in modeling a drug trafficking network spanning multiple regions, the removal of a drug dealer in one region would not impact the operations of a dealer in a different region.

For the MFNIP in this network, as demonstrated in Figure \ref{fig:sfig2}, it is optimal to interdict one of the two nodes with a capacity of $4$. However, once we incorporate restructuring, interdicting that node allows for the dashed arc in \ref{fig:sfig3} to be restructured, allowing for flow to be shifted in a way that increases the total flow through the network. With this set of restructurable arcs, it is optimal to interdict either node with capacity $7$, as the arcs that would be allowed to restructure after this interdiction end at nodes already at capacity. Further, note that if we could \emph{only} interdict the nodes with capacity $4$, instead of being able to interdict a node in the bottom tier, then it is optimal to not implement any interdiction decisions, as the resulting restructuring will always increase the flow to be more than in the original network.

\begin{figure}[h!]
\centering
\begin{subfigure}{.45\textwidth}
  \centering
  \includegraphics[width=\linewidth]{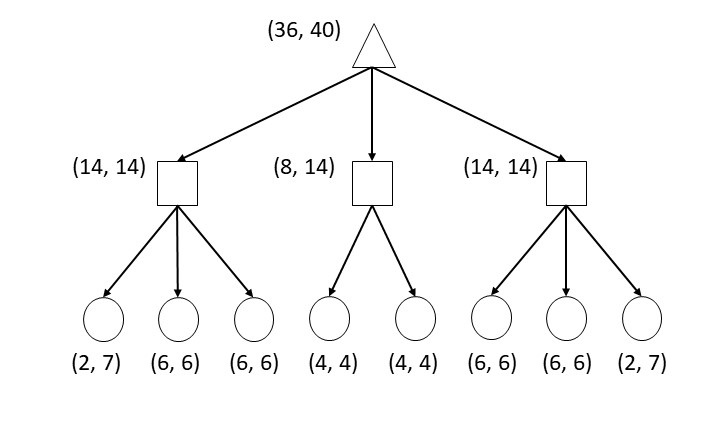}
  \caption{Original optimal flow}
  \label{fig:sfig1}
\end{subfigure}%
\begin{subfigure}{.45\textwidth}
  \centering
  \includegraphics[width=\linewidth]{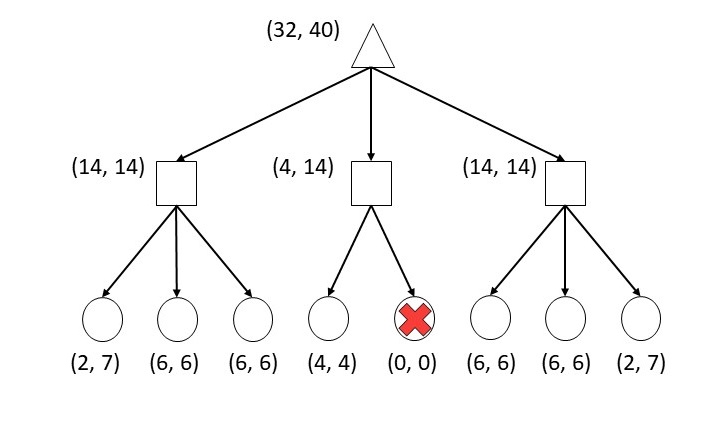}
  \caption{Optimal interdiction decision under MFNIP}
  \label{fig:sfig2}
\end{subfigure}

\begin{subfigure}{.45\textwidth}
  \centering
  \includegraphics[width=\linewidth]{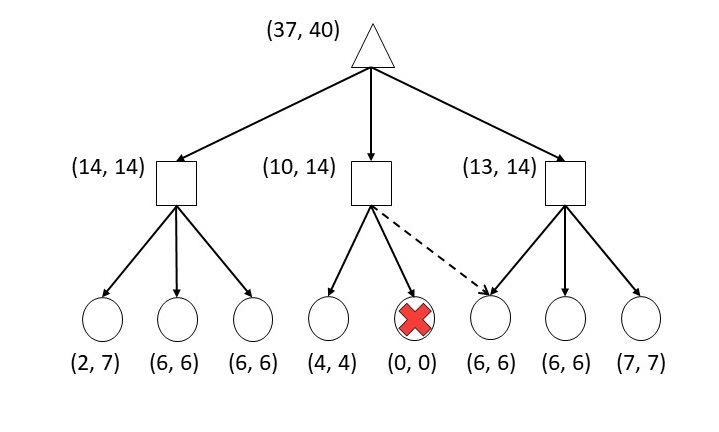}
  \caption{Restructuring decision after optimal MFNIP interdictions}
  \label{fig:sfig3}
\end{subfigure}%
\begin{subfigure}{.45\textwidth}
  \centering
  \includegraphics[width=\linewidth]{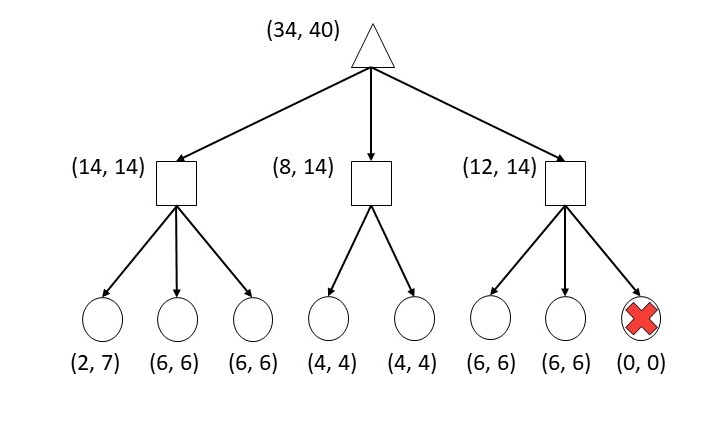}
  \caption{Optimal interdiction decision accounting for restructurings}
  \label{fig:sfig4}
\end{subfigure}
\caption{How the optimal interdiction plan changes when restructuring is allowed in an example network}
\label{fig:example}
\end{figure}

In this example network, there are some nodes in the bottom tier that have a flow less than their capacity. Our model would not allow arcs to be restructured ending at these nodes, even though one might suspect that, in practice, the nodes would want to. We can think of this as modeling a closed market, where dealers may not want to sell to users they are unfamiliar with, unless the users they currently sell to have been interdicted \cite{potter2009exploring}.

We first wish to find cases for interdiction decisions under which the value of the flow will not increase, regardless of restructuring decisions. We do so by considering the auxiliary network $D_f = (N, A_f)$ of the optimal maximum flow in the current network. We construct this auxiliary network by the following procedure for every $(i,j) \in A$. If $x_{ij} < u_{ij}$, then $(i,j) \in A_f$. If $x_{ij} > 0$, then $(j, i) \in A_f$. Define the corresponding minimum $(s,t)$-cut to be $[U, \bar{U}]$, where a node $i \in U$ if $i$ can be reached from $s$ in $D_f$, and $i \in \bar{U}$ otherwise. We consider the type of an arc $(i,j)$ that is allowed to be restructured. The proof of the lemma appears in Appendix \ref{apdx:pf}.

\begin{lm}
\label{lm:noInc}
Consider an arc $(i,j) \in A^R$. If any of the following conditions hold, restructuring $(i,j)$ will not increase the flow:
\begin{enumerate}
    \item If $i \in \bar{U}$,
    \item If $i \in U$ and $j \in U$, 
    \item If $i \in U$ and $j \in \bar{U}$ and there is no path from $j$ to $t$ in $D_f$.
\end{enumerate}
Additionally, if restructuring $(i,j)$ will not increase the flow, then $(i,j)$ must satisfy one of these three conditions.
\end{lm}

These results can be combined into more powerful results about sets of nodes allowed to restructure based on the selected interdiction decisions. 

\begin{thrm}
\label{thrm:no_inc}
Let $I$ be the set of arcs that are allowed to restructure based on the selected interdiction decisions, where all arcs in $I$ satisfy one of Conditions $1$, $2$, or $3$ of Lemma \ref{lm:noInc}. 
\begin{enumerate}
    \item If all $(i,j) \in I$ satisfy Condition $1$ or Condition $2$ of Lemma \ref{lm:noInc}, then the value of the flow will not increase.
    \item If there is an arc $(i,j) \in I$ satisfying Condition $3$ of Lemma \ref{lm:noInc} and there is no subset of arcs $J \subseteq I$ that would complete a path from $s$ to $t$ in $D_f$ that includes $(i,j)$, then the value of the flow will not increase.
\end{enumerate}
\end{thrm}

We can interpret these results in terms of \emph{restructuring across the minimum cut} for the original maximum flow.  In particular, Conditions $1$ and $2$ of Lemma \ref{lm:noInc} ensure that the restructuring decisions do not introduce arcs that cross the minimum cut. Condition $3$ ensures that if the restructured arc crosses the minimum cut, it does not enter a node with a path to the sink, where this path may include additional restructurable arcs $J$. We note that, the remaining case, when $i \in U$, $j \in \bar{U}$, and there is a path from $j$ to $t$ in $D_f$ will result in an increase in flow, since adding arc $(i,j)$ will complete a path from $s$ to $t$ in $D_f$. However, the restructuring of such an arc (or set of arcs) may not necessarily result in an increase in flow greater than the decrease in flow caused by interdictions. For example, if the circle nodes in Figure \ref{fig:example} with capacity $7$ instead had capacity $5$, the amount of flow recovered from restructuring would be $3$, resulting in a net decrease in flow.

This interpretation has important policy implications for disrupting drug trafficking networks.  For city-level drug trafficking networks, Malaviya et al.\ \cite{multi} have shown that the minimum cut in the network tends to be be between the drug dealer and user level.  Therefore, our results suggest that for this application, a focus on interdictions at this level will not impact the overall prevalence of drug trafficking.  This mathematically confirms the analysis of criminal justice scholars \cite{prison}.

\section{Modeling Restructuring in Drug Trafficking Networks}
\label{modeling}
We focus our discussion on modelling hierarchical networks, since the field has found value in viewing drug trafficking networks as hierarchical networks. Each tier is representative of the node's role in the functionality of the network \cite{info, multi}. The first (bottom) tier is representative of drug users, the second tier is representative of drug dealers, the third tier is representative of safe houses of distinct organizations, while the tiers above that are representative of management in distinct organizations \cite{multi}. When we refer to a \emph{safe house}, we are referring to the individual(s) responsible for operating it and ensuring its role as a distributor of drugs to dealers. Distinct organizations are constituted from nodes in the safe house tier through the top tier, and there are no arcs that start at a node in one organization and end at a node in a different organization. The nodes in the highest tier for each organization are the suppliers for the organization. Arcs in these networks always start at the $i^{th}$ tier and end at the $(i - 1)^{th}$.

\subsection{Modeling Interdiction Decisions}
We first define the set of feasible interdiction decisions $Y$. The first constraint is one that all interdiction models include, a budget constraint. Suppose $c_{i}$ is the cost to interdict node $i$, and a budget $b$. We define the budget constraint as:
\begin{equation}
    \sum_{i \in N} c_{i} y_{i} \le b\text{.}
\end{equation}

In disrupting drug trafficking networks, we need to interdict participants in a way that allows us to gather information about the participants above them. Malaviya et al.\ \cite{multi} model this as climbing the ladder constraints. Let $\tau_i$ be the number of child nodes of node $i$ that need to be interdicted before node $i$ can be interdicted. Climbing the ladder constraints can be defined as:
\begin{equation}
    \tau_i y_{i} \le \sum_{(i,j)\in A} y_{j} \text{ for all } i \in N\text{.}
\end{equation}

It may also be that we want to enforce that some nodes from a set of nodes are required to be interdicted. In the application of drug trafficking, we may want to understand how disrupting the high-level operations of the network triggers restructuring decisions. Let $T$ be the set of nodes we want to interdict at least $g$ nodes from. We can define this constraint as:
\begin{equation}
    \sum_{i \in T} y_{i} \ge g\text{.}
\end{equation}
In our application, we want to understand how disrupting the leadership of larger organizations impacts the restructuring of the network. In order to do this, we define $T$ to be the set of nodes in the top two tiers of the two largest organizations. We refer to this as `interdicting leadership'.

\subsection{Modeling Restructuring}
We now focus on how we model the reactions of the network to interdiction efforts. We first define what arcs belong to the set $A^R$. Since city-level drug trafficking are modeled as hierarchical networks, we define the base set of restructurable arcs to follow the same rules as arcs currently in the network do. This means that the initial set $A^R$ consists of arcs that could have belonged in the original arc set. These arcs would be representative of participants in the drug trafficking network forming connections with new individuals to sell their product to.

Before defining the constraints on the defender's decisions, we wish to capture which participant is initiating the restructuring efforts, not just the appearance of new arcs. We do so by defining $z = (z^{in}_{ij}, z^{out}_{ij})$, based on whether a node is able to restructure to a tier above or below them, respectively. More formally, $z^{in}_{ij}$ is the indicator of restructuring arc $(i,j)$ when node $j$ is able to restructure, and $z^{out}_{ij}$ is the indicator of restructuring an arc when node $i$ is able to restructure. For applications in disrupting drug trafficking networks, we model that participants in the network may seek to replace connections when either those above them (e.g., a user replacing a dealer) or below them (e.g., a safe house/distributor replacing a dealer) have been interdicted. Therefore, it is important to note whether the higher level or lower level participant is reaching out to establish the new connection, as we will assume there is a limit to the number of re-connections a participant can establish. 

We now describe constraints on the decisions of the defender, $Z(y)$. We assume that they have structured their current network (pre-interdiction) systematically and thus will only allow a small number of replacements to occur if the interdictions impact the participants a node is directly connected to. First, we want to restrict the number of arcs leaving node $i$. In the drug trafficking network, this indicates that each participant's resources to restructure their network are limited or they desire to only react in a small way to limit the number of new participants that know about them. We only allow $l_i$ new arcs to be created based on actions taken by given node $i$,  signifying that each participant only has time to develop a limited number of new connections below them. We can capture this with the following constraint:
\vspace{-5mm}
\begin{singlespace}
\begin{equation}
    \label{leave}
    \sum_{(i,j) \in A^R} z^{out}_{ij} \le l_i \text{ for all } i \in N\text{.}
\end{equation}
\end{singlespace}

Next, we want to restrict the number of arcs entering node $j$. In our drug trafficking network example, this could be represented multiple ways. At the lowest level, a drug user does not need a large number of dealers, and may want to limit the number of dealers they purchase from in order to limit the number of risky situations they put themselves in. Similarly for higher levels, participants may want to work with a smaller number of people so that the they can build trust amongst one another and decrease the chances of a connection providing information about them to law enforcement. We only allow $s_j$ new arcs to be created by the actions of a given node $j$, limiting the number of connections a participant can form with participants above them. We can define the constraint limiting the number of arcs entering a node during the restructuring as:
\vspace{-3mm}
\begin{singlespace}
\begin{equation}
    \label{enter}
    \sum_{(i,j)\in A^R} z^{in}_{ij} \le s_j \text{ for all } j \in N\text{.}
\end{equation}
\end{singlespace}
The next restriction we make is to only allow nodes that are incident to an interdicted arc to replace a disrupted connection. This is to capture that it is likely that local `reactions' will occur within the network, i.e., if the interdiction did not disrupt the status quo for a certain segment of the network, this segment would have no reason to restructure itself.  For example, in order to make up for their lost earnings from losing a drug user, a drug dealer may seek out new users to sell to. A parameter $q_i$ for node $i$ determines how many new arcs are allowed to be added in and out of node $i$ \textit{per interdicted participant} connected to node $i$. Intuitively, interdicting a parent node allows for restructuring to a new parent node, and interdicting a child node allows for restructuring to a new child node.
We can model this constraint as:
\vspace{-3mm}
\begin{singlespace}
\begin{equation}
    \label{neighborModAbove}
    \sum_{(i,j)\in A^R} z^{out}_{ij} \le q_i \sum_{h:(i,h) \in A}  y_{h} \text{ for all } i \in N\text{.}
\end{equation}
\begin{equation}
    \label{neighborModBelow}
    \sum_{(i,j)\in A^R} z^{in}_{ij} \le q_j\sum_{h:(h,j) \in A}  y_{h} \text{ for all } j \in N\text{.}
\end{equation}
\end{singlespace}
Observe that if no child node of a node $i$ has been interdicted, then $y_{h}=0$ for all $h$, $(i,h)\in A$. Thus, the sum on the right side of the inequality \eqref{neighborModAbove} will be zero. This then forces $z^{out}_{ij}=0$ for all $j$, $(i,j) \in A^R$, capturing the desired behavior. The same behavior occurs for parent nodes of a node $j$. 

In order to not have the same arc added twice, we add the following constraint to ensure that both $z_{ij}^{out} = 1$ and $z_{ij}^{in} = 1$ cannot happen.
\vspace{-3mm}
\begin{singlespace}
\begin{equation}
    \label{noDouble}
    z^{in}_{ij} + z^{out}_{ij} \le 1 \text{ for all } (i, j) \in A^R\text{.}
\end{equation}
\end{singlespace}

The last constraint we have is a budget constraint. Similarly to the attacker having a budget $b$ to interdict the network with, we limit the amount of resources to restructure the entire network, where $a_{ij}$ is the amount of resources needed to restructure arc $(i,j)$, and $r$ is the maximum amount of resources available to restructure the network. This gives us the final constraint:
\vspace{-3mm}
\begin{singlespace}
\begin{equation}
    \label{budget}
    \sum_{(i,j)\in A^R} a_{ij} z_{ij} \le r\text{.}
\end{equation}
\end{singlespace}

With these four sets of constraints, we are able to capture the behavior of the defender reacting to attacker's decisions. We formalize the set $Z(y)$ as:
\vspace{-5mm}
\begin{singlespace}
\begin{align*}
    &\sum_{(i,j) \in A^R} z^{out}_{ij} \le l_i &\text{ for all } i \in N \\
    &\sum_{(i,j)\in A^R} z^{in}_{ij} \le s_j &\text{ for all } j \in N \\
    &\sum_{(i,j)\in A^R} z^{out}_{ij} \le q_i\sum_{h:(i,h) \in A} y_{h} &\text{ for all } i \in N \\
    &\sum_{(i,j)\in A^R} z^{in}_{ij} \le q_j\sum_{h:(h,j) \in A} y_{h} & \text{ for all } j \in N \\
    &z^{in}_{ij}+z^{out}_{ij} \le 1 &\text{ for all } (i,j) \in A^R \\
    &\sum_{(i,j) \in A^R} a_{ij} (z^{in}_{ij} + z^{out}_{ij}) \le r \\
    &z^{in}_{ij}, z^{out}_{ij} \in \{0, 1\} &\text{ for all } (i,j) \in A^R
\end{align*}
\end{singlespace}

\subsubsection{Adaptations on the Set of Restructurable Arcs}
The above defines our base instances for restructuring in city-level drug trafficking networks. We additionally consider two variants on top of this base set of arcs. We first discuss the recruitment of new participants, then discuss promotion of participants and recruitment across organizations (the two of which we refer to as organizational restructuring). We model the recruitment of new participants as nodes that are not yet connected to the source. For a new participant node $j$, we add the arc $(j, k)$ to the currently existing arc set, so the new participant can reach the layer below it. However, we add arcs $(i, j)$ to the set of restructurable arcs. For example, recruitable drug users would have an arc to the sink in the original network, and arcs to drug dealers in the restructurable set. Since there are no arcs $(i, j)$ in the network, there is no path from the source node to $j$, and thus no flow into $j$. To restructure arc $(i,j)$, we would need to interdict a node $h$ with $(i,h) \in A$, which, intuitively, is interdicting a currently existing child node of $i$ and, therefore, $h$ and $j$ would have similar roles in the network. When an arc $(i, j)$ is added to the network, there is the possibility of completing a path from the source, allowing the possibility of flow through $j$. This assumes each recruitable participant knows of other participants in the network, such as recruitable dealers knowing of users to sell to. Note that no new constraint needs to be added to include this modeling adaptation, as recruiting a new participant is incorporated in \eqref{neighborModAbove}.

We now seek to model organizational restructuring, which allows for the promotion of participants currently in an organization, as well as the recruitment of participants from one organization into another. We model promotion as including some arcs $(i,j) \in A^R$, where node $i$ is two tiers above $j$, instead of the typical one tier difference. When including this arc, we want to ensure that our promoted node is replacing one of its parent nodes. This means we only allow $z^{in}$ to take a non-zero value, as $z^{in}$ is allowed to be non-zero when a parent node is interdicted. Let $P$ be the set of promotable nodes. For all $p \in P$, we enforce 
\begin{equation}
    z^{out}_{ip} = 0 \text{ for all } (i, p) \in A^R \text{.}
\end{equation}

We model recruitment from one organization to another as including some arcs $(i,j) \in A^R$ where node $i$ belongs to one organization and node $j$ belongs to a different organization. As per the arcs described in the base instance, node $i$ is only one tier above node $j$. We want a node $j$ to be able to recruited only when the node they work for has been interdicted; a higher level participant has no incentive to join a different organization unless it is necessary for them to continue their operations. This again translates into only wanting $z^{in}_{ij}$ to be able to be non-zero. Let $C$ be the set of nodes eligible for cross-organizational recruitment. For all $c \in C$ we enforce
\begin{equation}
    z^{out}_{ic} = 0 \text{ for all } (i, c) \in A^R\text{.}
\end{equation}

We note that, only budget constraints are needed to define $Y$ and $Z(y)$. If $Z(y) = Z$ for all $y \in Y$, meaning our restructuring decisions are independent of interdiction decisions, our method reduces to the same method as \cite{cncg1}, which the method of \cite{cncg2} will also reduce to.

\section{Our Algorithm}
\label{algorithm}
We now derive our column-and-constraint (C\&CG) algorithm to solve MFNIP-R. In our C\&CG algorithm, we iteratively generate interdiction plans and minimum cut values associated with feasible restructuring plans constructed from previously considered restructuring plans, then determine what the optimal restructuring plan is to that interdiction plan. We note that, while our method is derived specifically for the application of disrupting city-level drug trafficking networks, the following derivation can be used for a general network.

In Yue et al.\ \cite{cncg2}, when a previously visited lower level integral solution is infeasible with respect to the current upper level solutions, we exclude the constraints in the upper level problem associated with the lower level solution. We propose a novel projection scheme based on the monotonicity of $Z(y)$ to find a non-trivial lower level decision that satisfies the constraints enforced by the new upper level decision.  In other words, we use \emph{partial information} obtained in previous iterations to tighten the bounds on the true objective function of a selected upper level decision. The method we derive is independent of network structure. As such, we will derive our method for a general network. Before proceeding with the description of the algorithm, we first define what is means for $Z(y)$ to be monotonic, as defined in Assumption 1 in \cite{monotone}.
\begin{defn}
A set $Z(y)$ is \textbf{monotonic} if for a fixed $y \in Y$, for all $z \in Z(y)$, if $0 \le \hat{z} \le z$, then $\hat{z} \in Z(y)$.
\end{defn}

Based on the constraints defined in Section \ref{modeling}, $Z(y)$ is monotonic. Intuitively, $\hat{z}$ can be thought of as a partial restructuring plan; choosing to restructure less arcs is still a feasible restructuring plan. If any $z_{ij} = 1$ but $\hat{z}_{ij} = 0$, then $\hat{z}$ is restructuring fewer arcs than $z$. Under this assumption, any constraints defining $Z(y)$ that are independent of $y$ will be satisfied by $\hat{z}$. As long as we can easily evaluate the dependency of the lower level decisions on the upper level decisions, we can find non-trivial partial information responding to new interdiction plans.

To derive the column-and-constraint generation algorithm, we first consider the maximization problem of the defender as two separate maximization problems, first maximizing over restructuring decisions $z$, then maximizing over flow decisions $x$. We can then take the dual of the inner-most problem (the flow problem) to offer an alternative formulation. Since the inner-most maximization problem will only have continuous variables, by strong duality of linear programs, the objective values of the bilevel and trilevel formulation would be equivalent. Let $\pi_i^+$ be the dual variable associated with the flow balance constraint for flow into node $i$, $\pi_i^-$ be the dual variable associated with the flow balance constraint for flow leaving node $i$, and $\theta_i$ and $\theta_{ij}$ be the dual variables associated with the capacity constraint for node $i$ and arc $(i,j)$, respectively. We now present the equivalent dual-based formulation of \eqref{model1}:

\begin{singlespace}
\small\begin{align}
    \label{tlforig}
    \min_y \max_z \min_{\pi,\theta} ~~~ & \sum_{i \in N\setminus \{s,t\}} u_{i}(1-y_i) \theta_i + \sum_{(i,j) \in A} u_{ij} \theta_{ij} \nonumber\\ &+ \sum_{(i,j) \in A^R} u_{ij} (z_{ij}^{in} + z_{ij}^{out}) \theta_{ij}  \nonumber\\  
    \text{s.t.} \hspace{.050cm }& \pi_j^+ + \theta_{sj} \ge 1 & \text{ for } (s,j) \in A \cup A^R \nonumber\\
    & \pi_j^+ - \pi_i^- + \theta_{ij} \ge 0 & \text{ for } (i,j) \in A \cup A^R \text{ s.t. } i \ne s, j \ne t  \nonumber\\
    & \pi_i^- - \pi_i^+ + \theta_{i} \ge 0 & \text{ for } i \in N\setminus \{s,t\} \\
    & -\pi_i^- + \theta_{it} \ge 0 & \text{ for } (i,t) \in A \cup A^R \nonumber\\
    & \theta \ge 0 \nonumber\\
    & z^{in}, z^{out} \in Z(y) \nonumber\\
    & y \in Y \nonumber
\end{align}
\end{singlespace}

We note that the dual to the max flow problem is the min cut problem. While there are many solutions to the linear program, we can identify when a solution corresponds to a feasible cut. 

\begin{defn}
We say a solution $(\pi^+, \pi^-, \theta)$ is a \textbf{feasible cut solution} if the following conditions are true:
\begin{itemize}
    \item If node $i$ is on the source side of the cut, $\pi_i^+ = \pi_i^- = 1$ and $\theta_i = 0$
    \item If a node $i$ is on the sink side of the cut, $\pi_i^+ = \pi_i^- = 0$ and $\theta_i = 0$
    \item If a node $i$ is in the cut, $\pi_i^+ = 1$, $\pi_i^- = 0$, and $\theta_i=1$
    \item If an arc $(i,j)$ is in the cut, $\theta_{ij} = 1$, and otherwise $\theta_{ij} = 0$
\end{itemize}
\end{defn}

In the classical minimum cut problem, the constraint matrix is totally unimodular, which implies that the continuous variables will be integral in the optimal solution to the linear program. Likewise, for fixed $(y, z)$, the inner-most minimization problem of \eqref{tlforig} will also satisfy this property.  Without loss of generality, any solution $(\pi^+, \pi^-)$ can be shifted to fit the above definition. Thus, there must be an optimal solution that is a feasible cut solution.

In a standard network interdiction problem, we would use the McCormick inequalities to linearize the $y_{i} \theta_{i}$ terms. However, in the context of our C\&CG algorithm, using the McCormick inequalities to linearize the $z_{ij}\theta_{ij}$ terms will result in a large number of variables and constraints being generated. We can instead use the following equivalent formulation of the dual problem, similar to Model 1D presented in \cite{orig}:

\begin{singlespace}
\small\begin{align}
    \label{tlf}
    \min_{y\in Y} \max_z \min_{\pi,\theta} ~~~ & \sum_{i \in N \setminus \{s,t\}} u_i \theta_i + \sum_{(i,j) \in A \cup A^R} u_{ij} \theta_{ij} \nonumber\\  
    \text{s.t.} \hspace{.050cm }& \pi_j^+ + \theta_{sj} \ge 1 & \text{ for } (s,j) \in A \nonumber\\
    & \pi_j^+ - \pi_i^- + \theta_{ij} \ge 0 & \text{ for } (i,j) \in A   \nonumber \text{ s.t. } i \ne s, j \ne t \nonumber\\
    & \pi_i^- - \pi_i^+ + \theta_{i} \ge - y_{i} & \text{ for } i \in N  \setminus \{s,t\} \nonumber\\
    & -\pi_i^- + \theta_{it} \ge 0 & \text{ for } (i,t) \in A \nonumber \\
    & \pi_j^+ + \theta_{sj} \ge z_{sj}^{in} & \text{ for } (s,j) \in A^R \\
    & \pi_j^+ - \pi_i^- + \theta_{ij} \ge z_{ij}^{out} - 1 & \text{ for } (i,j) \in A^R \text{ s.t. } i \ne s, j \ne t \nonumber\\
    & \pi_j^+ - \pi_i^- + \theta_{ij} \ge z_{ij}^{in} - 1 & \text{ for } (i,j) \in A^R \text{ s.t. } i \ne s, j \ne t \nonumber\\
    & \theta \ge 0 \nonumber\\
    & z^{in}, z^{out} \in Z(y) \nonumber\\
    & y \in Y \nonumber
\end{align}
\end{singlespace}

We note that in \eqref{tlf}, we have no $z^{out}_{sj}$, $z^{in}_{it}$, $z^{out}_{it}$ variables. This is because the source and sink nodes are not representative of participants in the network, and thus have no restructuring capabilities of their own, removing the need to define $z^{out}_{sj}$ and $z^{in}_{it}$. We have no $z^{out}_{it}$ because we assume drug dealer nodes cannot restructure to the sink node (i.e., they would be selling their product to themselves), and that recruitable drug users are already connected to the sink, not a drug dealer. We do still have $z^{in}_{sj}$ variables in the case where a node can be promoted to become a supplier.

Similarly to \eqref{tlforig}, for any feasible $(y, z)$, there will be an optimal solution to \eqref{tlf} where the continuous variables will be integral. This allows for the following theorem.

\begin{thrm}
\label{thrm:equiv}
The objective value of the optimal solution to \eqref{tlforig} is equivalent to that of the objective value of the optimal solution to \eqref{tlf}.
\end{thrm}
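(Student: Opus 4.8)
The plan is to reduce the claimed equivalence to a pointwise statement about the inner-most minimization problems. Since \eqref{tlforig} and \eqref{tlf} share the identical outer structure $\min_{y \in Y} \max_{z \in Z(y)}$ over the same feasible sets, it suffices to fix an arbitrary $y \in Y$ and $z \in Z(y)$ and show that the two inner linear programs over $(\pi, \theta)$ attain the same optimal value; equality of the full $\min_y \max_z$ optima then follows immediately by taking the same sequence of outer optimizations.

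To carry this out, I would first invoke the total-unimodularity discussion preceding the theorem, which guarantees that for each fixed $(y,z)$ both inner problems admit an optimal solution that is a feasible cut solution with $\pi_i \in \{0,1\}$ (and $\pi_s = 1$, $\pi_t = 0$ encoded through the source- and sink-arc constraints). This lets me restrict attention to the finite family of $(s,t)$-cuts, and for each cut $\pi$ compute the minimum of the objective over $\theta \ge 0$ separately in each formulation.

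The heart of the argument is a per-arc comparison for a fixed cut $\pi$. In \eqref{tlforig} the constraints force $\theta_{ij} \ge 1$ exactly on the arcs crossing the cut (those with $\pi_i = 1$, $\pi_j = 0$) and permit $\theta_{ij} = 0$ otherwise, so minimizing the objective yields the cut value $\sum_{(i,j)\in A \text{ crossing}} u_{ij}(1-y_{ij}) + \sum_{(i,j)\in A^R \text{ crossing}} u_{ij} z_{ij}$. For \eqref{tlf} I would show that the modified right-hand sides relax each crossing-arc constraint to $\theta_{ij} \ge 1 - y_{ij}$ on $A$ and $\theta_{ij} \ge z_{ij}$ on $A^R$, while every non-crossing constraint remains satisfiable at $\theta_{ij} = 0$. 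Minimizing the now uniform-coefficient objective $\sum u_{ij}\theta_{ij}$ therefore produces exactly the same cut value: the $-y_{ij}$ relaxation lets interdicted crossing arcs take $\theta_{ij}=0$ at no cost, mirroring the zeroed coefficient $(1-y_{ij})$ in \eqref{tlforig}, and the $z_{ij}-1$ relaxation forces $\theta_{ij} = z_{ij}$ on restructure-able crossing arcs, mirroring the coefficient $z_{ij}$.

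I expect the main obstacle to be the bookkeeping in this case analysis rather than any conceptual difficulty. In particular, I would need to verify that the source-arc and sink-arc constraints (which implicitly carry $\pi_s=1$ and $\pi_t=0$) yield the same per-arc contributions as the generic interior-arc constraints, and to confirm that every non-crossing configuration — both endpoints on the same side, or a reverse-oriented arc — leaves $\theta_{ij}=0$ feasible in \eqref{tlf} despite the slackened right-hand side, since $-y_{ij} \le 0$ and $z_{ij}-1 \le 0$. Once the per-cut objectives are shown identical, taking the minimum over all feasible cuts gives equality of the inner optimal values for each $(y,z)$, which completes the proof.
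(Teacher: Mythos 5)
Your proposal is correct and follows essentially the same route as the paper's proof: fix $(y,z)$, invoke the total-unimodularity discussion to restrict both inner minimizations to feasible cut solutions, and then do a per-arc case analysis showing that interdicted arcs ($y_{ij}=1$) and non-restructured arcs ($z_{ij}=0$) contribute zero in both formulations while all other crossing arcs contribute $u_{ij}$. Your packaging — projecting out $\theta$ for each fixed cut $\pi$ and comparing cut values directly — is a mild streamlining of the paper's bidirectional conversion of feasible cut solutions, but the substance of the argument is identical.
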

\begin{proof}
Let $y \in Y$ be a feasible interdiction plan and $z \in Z(y)$ be a feasible restructuring plan. We demonstrate that any feasible solution that corresponds to a feasible cut in one program can be converted to a feasible solution that corresponds to a feasible cut in the other.

We first consider a feasible solution $(\pi^+, \pi^-, \theta)$ in \eqref{tlforig} that corresponds to a feasible cut solution. We now construct a feasible solution $(\hat{\pi}^+,\hat{\pi}^-, \hat{\theta})$ to \eqref{tlf} with the same objective value. We set $\hat{\pi}^+ = \pi^+$ and $\hat{\pi}^+ = \pi^+$, and will construct $\hat{\theta}$ such that the objective value associated with $(\hat{\pi}^+,\hat{\pi}^-, \hat{\theta})$ will be the same as the objective value associated with $(\pi^+, \pi^- \theta)$. With these choices of $\hat{\pi}^+$ and $\hat{\pi}^-$, then $\hat{\theta}_{ij} = \theta_{ij}$ for all $(i,j) \in A$ will be feasible. Note that if $\theta_{i}=0$ for $i \in N$, then $\pi^+_i \le \pi_i^-$. In \eqref{tlforig}, we will have that $(1-y_{i})\theta_{i} = 0$ regardless of $y_{i}$. Likewise, in \eqref{tlf}, the corresponding constraint will evaluate to $\theta_{i} \ge - y_{i}$. Since $y_{i} \in \{0, 1\}$, then $- y_{i} \le 0$. Thus, $\theta_{i} = 0$ is feasible in \eqref{tlf}.

Now consider where $\theta_{i} = 1$ in \eqref{tlforig}. Note that if $\theta_{i}=1$, it must be that $\pi_i^- - \pi_i^+ = - 1$, else $\theta_{i} = 0$ will be feasible and the solution is not a feasible cut solution. In \eqref{tlf}, the corresponding constraint will evaluate to $-1 + \theta_{i} \ge - y_{i}$. We have two cases: $y_{i} = 0$ and $y_{i}=1$. If $y_{i}=0$, then $(1-y_{i})\theta_{ij} = 1$. In \eqref{tlf} the corresponding constraint further evaluates to $-1 + \theta_{i} \ge 0$, thus enforcing $\theta_{i} \ge 1$. Thus, we will have that $\hat{\theta}_{i} = 1 = (1 - y_{i})\theta_{i}$ is feasible. If $y_{i} = 1$, then $(1-y_{i})\theta_{i} = 0$. In \eqref{tlf}, the corresponding constraint evaluates to $-1 + \theta_{i} \ge -1$. Thus, $\theta_{i} \ge 0$ is feasible, and we will have that $\hat{\theta}_{i} = 0 = (1 - y_{i})\theta_i$. Thus, $\hat{\theta}_{i}$ in \eqref{tlf} will take the same value as $(1-y_{ij})\theta_{ij}$ in \eqref{tlforig}, resulting in the same objective values. Similar arguments can be used to construct feasible $\hat{\theta}_{ij}$ for $(i,j) \in A^R$ such that $z_{ij}\theta_{ij} = \hat{\theta}_{ij}$ for all $(i,j) \in A^R$. Since this can be done for any feasible $(y, z)$, any feasible solution in \eqref{tlforig} that corresponds to a feasible cut solution has an equivalent solution in \eqref{tlf}.

To convert from a solution of \eqref{tlf} to \eqref{tlforig}, we can follow the process in reverse. Let $(\hat{\pi}^+, \hat{\pi}_-, \hat{\theta})$ be a feasible solution to \eqref{tlf} that corresponds to a feasible cut solution. We again take $\pi^+ = \hat{\pi}^+$, $\pi^- = \hat{\pi}^-$, and $\theta_{ij} = \hat{\theta_{ij}}$ for $(i,j) \in A$. If $\hat{\theta}_{i} = 1$, we also have that $\theta_{i} = 1$ is feasible. Suppose $\hat{\theta}_{i} = 0$. We have two cases, $y_{i} = 0$ and $y_{i} = 1$. If $y_{i} = 0$, then $\hat{\pi}^-_i \ge \hat{\pi}_i^+$, and thus $\theta_{i} = 0$ is feasible. This choice results in $(1 - y_{i})\theta_{i} = 0 = \hat{\theta}_{i}$. If $y_{i} = 1$, we again have two cases. If $\hat{\pi}_i^- \ge \hat{\pi}_i^+$, then $\theta_{i} = 0$ is feasible. Otherwise, if $\hat{\pi}_i^- - \hat{\pi}_i^+ = - 1$, we must have $\theta_{i} \ge 1$ to maintain feasibility in \eqref{tlforig}. However, with $y_{i}=1$, $(1-y_{i})\theta_{i} = 0 = \hat{\theta}_{i}$. As before, similar arguments can be used to construct feasible $\theta_{ij}$ for $(i,j) \in A^R$ such that $z_{ij}\theta_{ij} = \hat{\theta}_{ij}$. Since this can be done for any feasible $(y, z)$, any feasible cut solution to \eqref{tlf} that corresponds to a cut solution has an equivalent solution in \eqref{tlforig}. 

For fixed $(y, z)$, we know that an optimal solution of the inner-most minimization problem will correspond to a feasible cut solution. Let $f_{(2)}^*$ be the optimal objective value of the inner-most minimization problem of \eqref{tlforig} and  $f_{(3)}^*$ be the optimal objective value of the inner-most optimization problem of \eqref{tlf}. Consider the optimal solution $(\pi^{+^*}, \pi^{-^*} \theta^*)$ of \eqref{tlforig} that is a feasible cut solution, with objective value $f_{(2)}^*$. We can convert this solution to a feasible cut solution of \eqref{tlf} $(\hat{\pi}^{+^{(2)}},\hat{\pi}^{-^{(2)}}, \hat{\theta}^{(2)})$, with equivalent objective value  $f_{(2)}^*$. Since this solution is feasible in \eqref{tlf}, we have that  $f_{(3)}^* \le  f_{(2)}^*$. Likewise, consider the optimal solution $(\hat{\pi}^{+^*},\hat{\pi}^{-^*}, \hat{\theta}^*)$ of \eqref{tlf} that is a feasible cut solution, with objective value $f_{(3)}^*$. We can also convert this solution to a feasible cut solution of \eqref{tlforig} $(\pi^{+^{(3)}}, \pi^{-^{(3)}}, \theta^{(3)})$, with equivalent objective value  $f_{(3)}^*$. Since this solution is feasible in \eqref{tlforig}, we have that $f_{(2)}^* \le  f_{(3)}^*$. Thus, for fixed $(y, z)$, we have that  $f_{(2)}^* =  f_{(3)}^*$. Since this holds regardless of choice of $(y, z)$, it must be that, for any $(y, z)$, the inner-most minimization problems of the two programs will have equivalent objective values. Thus, the objective values of \eqref{tlforig} and \eqref{tlf} will be equivalent.
\end{proof}

The difference between the two models is quite subtle. Where \eqref{tlforig} computes the cut in the original network, then determines that interdicted arcs should not be counted towards the value and restructured arcs should be counted towards the value of the cut (if they are in it), \eqref{tlf} determines exactly what the cut is in the network after interdictions and restructuring have occurred.

We now remark that there are a finite number of interdiction plans, so $Y$ is finite. Additionally, for a given interdiction plan $y$, $Z(y)$ is finite. Thus, $\bigcup_{y \in Y} Z(y)$ is finite. For a given $y \in Y$, we can replace the maximization problem by enumerating over all possible decisions $z \in Z(y)$, defining $\pi^z$ and $\theta^z$ to be the partition of the node set and indicator of which arcs are in the cut corresponding to the network $G(z) = (N, A \cup \{(i,j) \in A^R: z_{ij}=1\})$. We additionally make the objective value associated with each $z$ a constraint, and enforce that $\eta$, the objective value of the bilevel problem, is at least as large as the objective value associated with each $z$. The following formulation is non-standard, since the set of constraints depends on the decision $y$.
\vspace{-9mm}
\begin{singlespace}
\begin{align}
    \label{tlfFE}
    \min_{y, \pi^+, \pi^-,\theta, \eta} ~~~ & \eta \nonumber\\
    \text{s.t.} \hspace{.050cm } & \eta \ge \sum_{i \in N \setminus\{s,t\}} u_i \theta_i^z + \sum_{(i,j) \in A \cup A^R} u_{ij} \theta_{ij}^z  & \text{ for } z \in Z(y) \nonumber\\  
    &\pi_j^{+^z} + \theta_{sj}^z \ge 1 & \text{ for } (s,j) \in A, z \in Z(y) \nonumber\\
    & \pi_j^{+^z} - \pi_i^{-^z} + \theta_{ij}^z \ge 0 & \text{ for } (i,j) \in A \text{ s.t. } i \ne s, j \ne t, z \in Z(y)  \nonumber\\
    & \pi_i^{-^z} - \pi_i^{+^z} + \theta_{i}^z \ge - y_{i} & \text{ for } i \in N  \setminus\{s,t\}, z \in Z(y)  \nonumber\\
    & -\pi_i^{-^z} + \theta_{it}^z \ge 0 & \text{ for } (i,t) \in A, z \in Z(y)   \\
    & \pi_j^{+^z} + \theta_{sj}^z \ge z_{sj}^{in} & \text{ for } (s,j) \in A^R, z \in Z(y)  \nonumber\\
    & \pi_j^{+^z} - \pi_i^{-^z} + \theta_{ij}^z \ge z_{ij}^{out} - 1 & \text{ for } (i,j) \in A^R \text{ s.t. } i \ne s, j \ne t, z \in Z(y)   \nonumber\\
    & \pi_j^{+^z} - \pi_i^{-^z} + \theta_{ij}^z \ge z_{ij}^{in} - 1 & \text{ for } (i,j) \in A^R \text{ s.t. } i \ne s, j \ne t, z \in Z(y)   \nonumber\\
    & \theta \ge 0 \nonumber\\
    & y \in Y \nonumber
\end{align}
\end{singlespace}

In \eqref{tlfFE}, the restructuring decisions $z$ now become parameters, as we enumerate over all possible restructuring decisions, leaving the attacker as the only decision-maker. In terms of our problem, the method of Yue et al.\ \cite{cncg2} would convert this non-standard formulation to a standard formulation by including implication constraints to determine if a restructuring plan is feasible with respect to the current interdiction plan, and only including constraints associated with feasible restructuring plans. These implication constraints prevent infeasible solutions from being considered when determining a lower bound on the objective function, which would invalidate the lower bound. They then reduce the size of the problem by only considering a subset of restructuring plans, then iteratively adding new plans that are relevant to determining the true objective value. However, if making a small change to the interdiction plan results in making many of the restructuring plans infeasible, then the lower bound on the true objective value will grow at a slower rate, causing the algorithm to converge slowly.

Instead of removing constraints associated with infeasible restructuring plans, we exploit the monotonicity of $z$ to identify the components of each restructuring plan that are feasible after changing the interdiction decision. Whenever we can identify a non-trivial restructuring plan from an infeasible restructuring plan, we include an additional valid lower bound on the true objective value that the method of Yue et al.\ \cite{cncg2} would exclude. Figure \ref{fig:wEx} demonstrates the benefit of this inclusion.

Consider the network in Figure \ref{fig:wEx1}. Suppose the attacker has the budget to interdict at most two nodes in the bottom tier, any interdiction allows the parent node of an interdicted node to restructure to any one node in the bottom tier, and the defender can restructure at most two arcs. Suppose we know the interdiction plan and restructuring plan identified in Figure \ref{fig:wEx2}. If we were to change our interdiction plan to the plan identified in Figure \ref{fig:wEx4}, the dotted arc on the right is not allowed to be restructured, but the dashed arc on the left is still able to be restructured. The method of \cite{cncg2} would determine that, since one arc cannot be restructured, the entire restructuring plan must be discarded, as demonstrated in Figure \ref{fig:wEx4}. This method would tell us that the objective value associated with that interdiction plan would be at least $31$. However, if we instead consider the restructuring sub plan of just the dashed arc on the left, as demonstrated in Figure \ref{fig:wEx3}, we can observe that the objective value associated with that interdiction plan is higher than what the method of \cite{cncg2} would identify. Our method would identify that the objective value associated with that restructuring plan would be at least $34$. Because of this, applying their method will require more iterations to identify the optimal solution than our method will, since their method results in a slower increase in the lower bound on objective value.

\begin{figure}[h!]
\centering\begin{subfigure}{.45\textwidth}
  \centering
  \includegraphics[width=\linewidth]{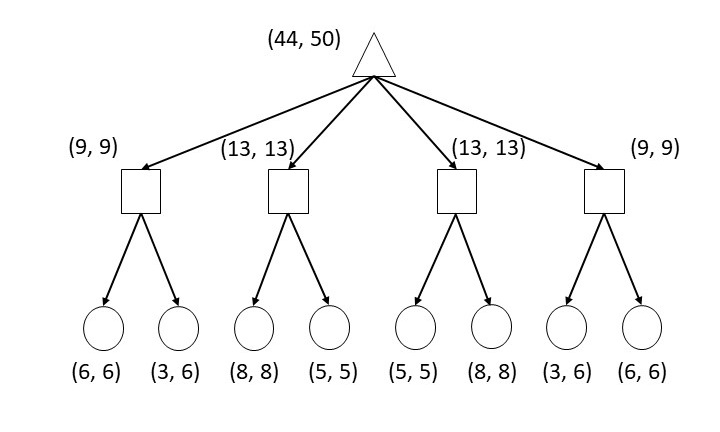}
  \caption{Example network with optimal flow}
  \label{fig:wEx1}
\end{subfigure}%
\begin{subfigure}{.45\textwidth}
  \centering
  \includegraphics[width=\linewidth]{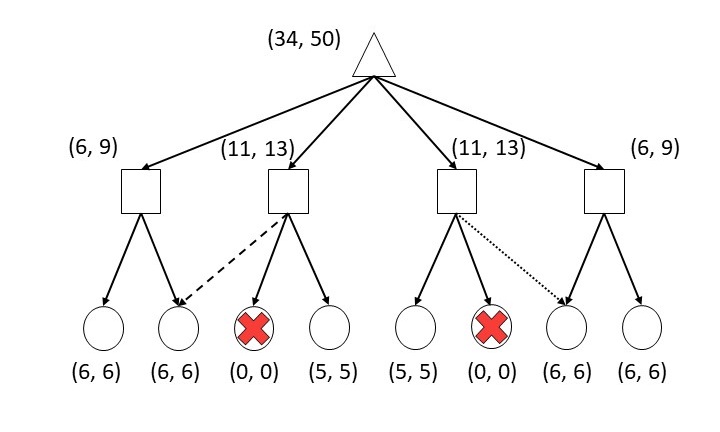}
  \caption{Sample interdiction and restructuring plans}
  \label{fig:wEx2}
\end{subfigure}

\begin{subfigure}{.45\textwidth}
  \centering
  \includegraphics[width=\linewidth]{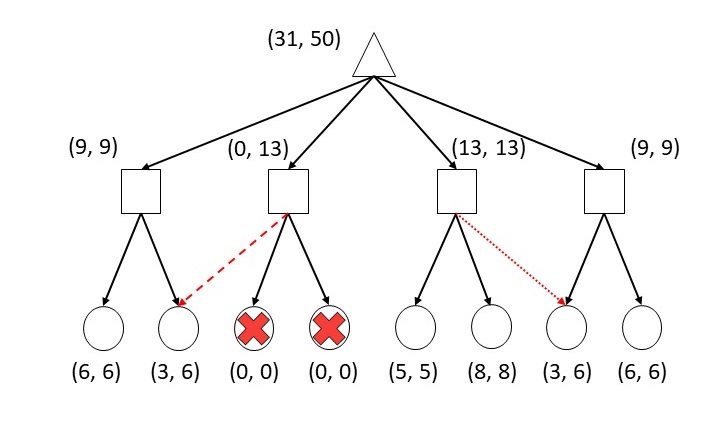}
  \caption{Restructuring plan discarded by \cite{cncg2} with new interdiction plan}
  \label{fig:wEx4}
\end{subfigure}%
\begin{subfigure}{.45\textwidth}
  \centering
  \includegraphics[width=\linewidth]{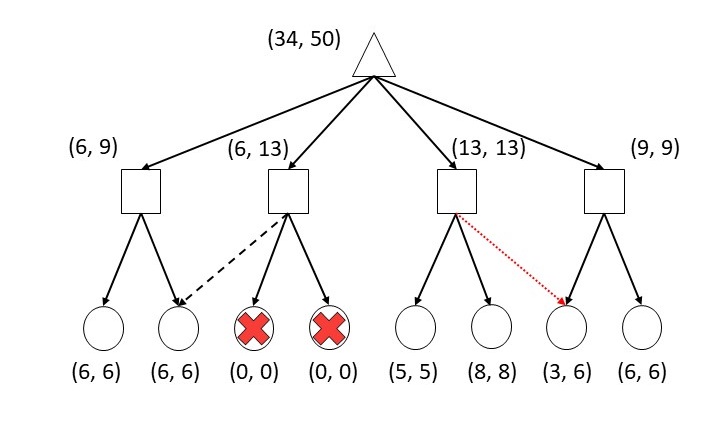}
  \caption{Feasible restructuring sub plan identified}
  \label{fig:wEx3}
\end{subfigure}%
\caption{Impact of identifying restructuring sub plans on determining the objective value of the MFNIP-R}
\label{fig:wEx}
\end{figure}

We now demonstrate how to identify feasible restructuring sub plans. Consider two interdiction plans $\bar{y}, \hat{y} \in Y$ where $\hat{y} \ne \bar{y}$. For a restructuring plan $\bar{z} \in Z(\bar{y}) \setminus Z(\hat{y})$ we wish to identify some point $\hat{z} \in Z(\hat{y})$ that is ``near" $\bar{z}$, i.e., we want to identify the restructuring plan that restructures as many arcs in $\bar{z}$ as possible while still being feasible with respect to the new interdiction plan $\hat{y}$. We define $w = (w^{in}, w^{out})$ to be the indicator of which arcs restructured in $\bar{z}$ are still able to restructured in response to the new interdiction plan $\hat{y}$. Referencing the network in Figure \ref{fig:wEx}, if $\bar{y}$ was the interdiction plan in Figure \ref{fig:wEx2} and $\bar{z}$ was the restructuring plan in Figure \ref{fig:wEx2}, then we would want to define constraints on $y$ and $w$ such that when we consider $\hat{y}$ to be the interdiction plan in Figure \ref{fig:wEx3}, we are able to use the $w$ variables to identify the restructuring plan in Figure \ref{fig:wEx3} as $\hat{z}$.

Since we have variables $z^{in}$ and $z^{out}$, we need to have variables $w^{in}$ and $w^{out}$ in our model. Because of this, we also need to adapt our model to reflect having two ways to restructure. Our constraints involving restructuring decisions are of the form $\pi^z_{j} - \pi^z_{i} + \theta^z_{ij} \ge z_{ij} - 1$. When $w_{ij}=1$, this does not consider whether $z_{ij}=1$ because $z^{out}_{ij}=1$ or $z^{in}_{ij}=1$. This consideration is needed in order to maintain feasibility with respect to the interdiction decisions $y$. If, for example, $z^{in}_{ij}=1$, but $w^{in}_{ij}=0$ and $w^{out}_{ij}=1$, we would not have that $(i, j)$ is a restructured arc in the sub plan, as no interdiction has occurred to allow this restructuring. To incorporate this, we need to replace the above set of constraints with a pair of constraints, $\pi^k_{j} - \pi^k_{i} + \theta^k_{ij} \ge z^{out,k}_{ij} + w^{out,k}_{ij} - 2$ and $\pi^k_{j} - \pi^k_{i} + \theta^k_{ij} \ge z^{in,k}_{ij} + w^{in,k}_{ij} - 2$. With this pair of constraints, the relationship between $\pi_{i}$, $\pi_{j}$ and $\theta_{ij}$ is only enforced when $z^{out,k}_{ij}=1$ and $w^{out,k}_{ij}=1$, or when $z^{in,k}_{ij}=1$ and $w^{in,k}_{ij}=1$. 

We now derive constraints based on equations \eqref{leave} - \eqref{budget} from Section \ref{modeling}. By monotonicity, any sub plan of a previously visited restructuring plan will satisfy constraints \eqref{leave}, \eqref{enter}, \eqref{noDouble}, and \eqref{budget}, since there is no dependency on $y$ in these constraints. What remains to enforce is feasibility based on constraints \eqref{neighborModAbove} and \eqref{neighborModBelow}. This means that we need define constraints to determine how many restructurings each node can perform based on a given interdiction plan, and which of the arcs chosen to be restructured in previously considered restructuring plans are still able to be restructured. However, we must also account for the fact that more interdictions can be done than potential allowable restructurings. To account for this, as well as the potential fractionality based on $q_i$, we define additional variables, $\delta_{i}^{in,k}, \delta_{i}^{out,k} \in \{0,1\}$, $\kappa_{i}^{in,k}, \kappa_{i}^{out,k} \in \mathbb{Z}_{\ge 0}$, and an additional parameter $\gamma > 0$. The variables $\kappa_{i}^{in,k}$ and $\kappa_{i}^{out,k}$ are representative of the number of arcs allowed to be restructured into and out of node $i$, respectively. The variables $\delta_{i}^{in,k}$ and $\delta_{i}^{out,k}$ are indicators of if, for each node $i$, every arc into or out of $i$ in the $k^{th}$ plan will be restructured, respectively. For each $k$, we can compute $w^{in,k}$ and $w^{out,k}$ with the following sets of constraints:

\begin{singlespace}
\begin{align}
    & q_j \sum_{h \in N:(h,j) \in A} y_{h} - 1 + \gamma \le \kappa_{j}^{in,k} \le q_j \sum_{h \in N:(h,j) \in A} y_{h} & \text{ for } j \in N \setminus \{s,t\}\\
    & q_j |\{h \in N:(h,j) \in A\}| \cdot \delta_{i}^{in,k} + \sum_{i \in N: (i,j)\in A^R, z^{in,k}_{ij} = 1} w^{in,k}_{ij} \ge \kappa_{j}^{in,k} & \text{ for } j \in N \setminus \{s,t\}\\
    & \delta_{j}^{in,k} \le \frac{\sum_{j \in N: (i,j)\in A^R, z^{in,k}_{ij} = 1} w^{in, k}_{ij}}{|\{i \in N: (i,j)\in A^R, z^{in,k}_{ij} = 1\}|} & \text { for } j \in N \setminus \{s,t\}\\
    & q_i \sum_{h \in N:(i,h) \in A} y_{h} - 1 + \gamma \le \kappa_{i}^{out,k} \le q_i \sum_{h \in N:(i,h) \in A} y_{h} & \text{ for } i \in N \setminus \{s,t\}\\
    & q_i |\{h \in N:(i,h) \in A\}| \cdot \delta_{i}^{out,k} +   \sum_{j \in N: (i,j)\in A^R, z^{out,k}_{ij} = 1} w^{out,k}_{ij} \ge \kappa_{i}^{out,k} & \text{ for } i \in N \setminus \{s,t\}\\
    & \delta_{i}^{out,k} \le \frac{\sum_{j \in N: (i,j)\in A^R, z^{out,k}_{ij} = 1} w^{out, k}_{ij}}{|\{j \in N: (i,j)\in A^R, z^{out,k}_{ij} = 1\}|} & \text { for } i \in N \setminus \{s,t\}
\end{align}
\end{singlespace}

Constraints (16) and (19) determine the number of arcs that are allowed to be restructured into or out of each node $j$, capturing the value of $\kappa_{j}^{in,k} = \lfloor q_j \sum_{h \in N:(h,j) \in A} y_{h} \rfloor$ and $\kappa_{j}^{out,k} = \lfloor q_j \sum_{h \in N:(j,h) \in A} y_{h}\rfloor$, respectively. Note that these sums in the inequalities are the same as the sums in \eqref{neighborModAbove} and \eqref{neighborModBelow}. Knowing how many arcs are allowed to restructure into or out of each node, constraints (17) and (20) determine which of the arcs were restructured in the $k^{th}$ restructuring plan should still be restructured, as determined by the attacker. However, we need to prevent infeasibility when the number of arcs that \textit{can be} restructured from a given node is more than the number of arcs that \textit{were} restructured from that node. If the number of arcs that can be restructured into (or out of) a node $j$ is greater than the number of arcs that have been restructured into (or out of) that node in the $k^{th}$ plan, $\delta_{j}^{in,k} = 1$ (or $\delta_{j}^{out,k}=1$, respectively), allowing the constraint to maintain feasibility. Constraints (18) and (21) enforce that these indicators are $0$ unless every previously restructured arc into (or out of) each node has been determined to also be restructured in a new plan. If $q_j$ is integral, the constraints (16) and (19) are unnecessary, since the inequalities in the constraint will never be able to take fractional values. In this case, $q_j \sum_{l:(l,j) \in A} y_{l}$ can be directly substituted in for $\kappa_{j}^{in,k}$ and $\kappa_{j}^{out,k}$.

With the inclusion of these constraints, we can now convert \eqref{tlfFE} to a standard formulation, where we enumerate over the fixed set $\bigcup_{y \in Y} Z(y)$ instead of just $Z(y)$. Let $\left|\bigcup_{y \in Y} Z(y) \right| = M$. The following is a standard formulation of \eqref{tlfFE}:

\begin{singlespace}
\footnotesize\begin{align}
    \label{tlfFEwP}
    \min_{y, \pi^+, \pi^-,\theta, \eta} ~~~ & \eta \nonumber\\
    \text{s.t.} \hspace{.050cm } & \eta \ge \sum_{i \in N \setminus \{s,t\}} u_i \theta_i^k + \sum_{(i,j) \in A \cup A^R} u_{ij} \theta_{ij}^k  & \text{ for } k=1,\ldots,M \nonumber\\  
    &\pi_j^{+^k} + \theta_{sj}^k \ge 1 & \text{ for } (s,j) \in A, k=1,\ldots,M \nonumber\\
    & \pi_j^{+^k} - \pi_i^{-^k} + \theta_{ij}^k \ge 0 & \text{ for } (i,j) \in A \text{ s.t. } i \ne s, j \ne t, k=1,\ldots,M \nonumber\\
    & \pi_i^{-^k} - \pi_i^{+^k} + \theta_{i}^k \ge - y_{i} & \text{ for } i \in N \setminus \{s,t\}, k=1,\ldots,M \nonumber\\
    & -\pi_i^{-^k} + \theta_{it}^k \ge 0 & \text{ for } (i,t) \in A, k=1,\ldots,M  \nonumber\\
    & \pi_j^{+^k} + \theta_{sj}^k \ge z_{sj}^{in,k} + w_{sj}^{in,k} - 1 & \text{ for } (s,j) \in A^R, k=1,\ldots,M\\
    & \pi_j^{+^k} - \pi_i^{-^k} + \theta_{ij}^k \ge z_{ij}^{out,k} + w_{ij}^{out,k} - 2 & \text{ for } (i,j) \in A^R\text{ s.t. } i \ne s, j \ne t, k=1,\ldots,M \nonumber\\
    & \pi_j^{+^k} - \pi_i^{-^k} + \theta_{ij}^k \ge z_{ij}^{in,k} + w_{ij}^{in,k} - 2 & \text{ for } (i,j) \in A^R\text{ s.t. } i \ne s, j \ne t, k=1,\ldots,M \nonumber\\
    & (16) - (21) & \text { for } i \in N\setminus \{s,t\}, k=1,\ldots,M \nonumber\\
    & w^k \in \{0, 1\}^{|A^R|} & \text{ for } k=1,\ldots,M \nonumber\\
    & \theta \ge 0 \nonumber\\
    & y \in Y \nonumber
\end{align}
\end{singlespace}

Now that we have a standard formulation enumerating over fixed set $\bigcup_{y \in Y} Z(y)$, we instead seek to identify a valid lower bound by enumerating over a subset of point in this set. Let $\{z^1, \ldots, z^n\} \subseteq \bigcup_{y \in Y} Z(y)$, where $n \le M$. By enumerating over these points, we can formulate a program that provides a valid lower bound on \eqref{tlfFEwP}.

\begin{singlespace}
\footnotesize\begin{align}
    \label{tlfPEwP}
    \min_{y, \pi^+, \pi^-,\theta, \eta} ~~~ & \eta \nonumber\\
    \text{s.t.} \hspace{.050cm } & \eta \ge \sum_{i \in N \setminus \{s,t\}} u_i \theta_i^k + \sum_{(i,j) \in A \cup A^R} u_{ij} \theta_{ij}^k  & \text{ for } k=1,\ldots,n \nonumber\\  
    &\pi_j^{+^k} + \theta_{sj}^k \ge 1 & \text{ for } (s,j) \in A, k=1,\ldots,n \nonumber\\
    & \pi_j^{+^k} - \pi_i^{-^k} + \theta_{ij}^k \ge 0 & \text{ for } (i,j) \in A \text{ s.t. } i \ne s, j \ne t, k=1,\ldots,n \nonumber\\
    & \pi_i^{-^k} - \pi_i^{+^k} + \theta_{i}^k \ge - y_{i} & \text{ for } i \in N \setminus \{s,t\}, k=1,\ldots,n \nonumber\\
    & -\pi_i^{-^k} + \theta_{it}^k \ge 0 & \text{ for } (i,t) \in A, k=1,\ldots,n  \nonumber\\
    & \pi_j^{+^k} + \theta_{sj}^k \ge z_{sj}^{in,k} + w_{sj}^{in,k} - 1 & \text{ for } (s,j) \in A^R, k=1,\ldots,n\\
    & \pi_j^{+^k} - \pi_i^{-^k} + \theta_{ij}^k \ge z_{ij}^{out,k} + w_{ij}^{out,k} - 2 & \text{ for } (i,j) \in A^R\text{ s.t. } i \ne s, j \ne t, k=1,\ldots,n \nonumber\\
    & \pi_j^{+^k} - \pi_i^{-^k} + \theta_{ij}^k \ge z_{ij}^{in,k} + w_{ij}^{in,k} - 2 & \text{ for } (i,j) \in A^R\text{ s.t. } i \ne s, j \ne t, k=1,\ldots,n \nonumber\\
    & (16) - (21) & \text { for } i \in N\setminus \{s,t\}, k=1,\ldots,n \nonumber\\
    & w^k \in \{0, 1\}^{|A^R|} & \text{ for } k=1,\ldots,n \nonumber\\
    & \theta \ge 0 \nonumber\\
    & y \in Y \nonumber
\end{align}
\end{singlespace}

\begin{thrm}
\label{thrm:LB}
The objective value of the optimal solution to \eqref{tlfPEwP} is a lower bound on the objective value of \eqref{tlfFEwP}. 
\end{thrm}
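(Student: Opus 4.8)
The plan is to recognize \eqref{tlfPEwP} as a relaxation of \eqref{tlfFEwP}, obtained by discarding the blocks of constraints associated with the restructuring plans that are enumerated in the full problem but omitted from the reduced one, and then to invoke the standard principle that dropping constraints from a minimization problem can only weakly decrease its optimal value. Both programs minimize the single scalar $\eta$, and the objective depends on nothing else, so the entire argument reduces to a feasibility comparison.

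First I would relabel the enumeration of $\bigcup_{y \in Y} Z(y)$ so that the chosen subset $\{z^1,\ldots,z^n\}$ occupies the first $n$ indices; this relabeling is without loss of generality and makes the constraint sets of the two programs directly comparable index by index. Writing $N=|\bigcup_{y\in Y}Z(y)|$, I would then observe that with this convention every constraint appearing in \eqref{tlfPEwP} --- the $\eta$ lower bounds, the six families of cut inequalities, and the Big-$M$ constraint $Mw_{ij}\ge B(y)$ --- appears verbatim in \eqref{tlfFEwP}, since the latter simply ranges $k$ over the larger index set $1,\ldots,N$. The shared variables $y$, $w$, and $\eta$ enter both programs identically, and the only additional variables carried by \eqref{tlfFEwP} are the extra cut blocks $(\pi^k,\theta^k)$ for $k>n$.

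The key step is a projection argument. I would take an arbitrary feasible point of \eqref{tlfFEwP}, with interdiction vector $\bar y$, cut blocks $\{(\bar\pi^k,\bar\theta^k)\}_{k=1}^{N}$, restructuring-feasibility indicators $\bar w$, and objective $\bar\eta$, and simply discard the blocks with $k>n$, retaining $\bar y$, $\{(\bar\pi^k,\bar\theta^k)\}_{k=1}^{n}$, $\bar w$, and $\bar\eta$. Because the surviving constraints of \eqref{tlfPEwP} are exactly the $k\le n$ subset of those satisfied by the original point, this restricted point is feasible for \eqref{tlfPEwP}, with objective value $\bar\eta$ unchanged. Applying this to an optimal solution of \eqref{tlfFEwP} exhibits a feasible point of \eqref{tlfPEwP} whose value equals the optimum of \eqref{tlfFEwP}; since \eqref{tlfPEwP} is a minimization, its optimum is at most that value, giving the desired inequality that the optimal value of \eqref{tlfPEwP} is at most that of \eqref{tlfFEwP}.

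There is no genuinely hard step here; the result is the familiar statement that a column-and-constraint generation master problem built from a subset of scenarios underestimates the full master. The only point that requires care --- and the one I would state explicitly --- is that the two programs live on different variable spaces, because \eqref{tlfFEwP} carries a $(\pi^k,\theta^k)$ block for every enumerated plan while \eqref{tlfPEwP} carries only the first $n$. I would therefore phrase the argument as a restriction of feasible points (deleting the extra blocks) rather than as a literal set inclusion of feasible regions, and I would confirm that the shared coupling variables $y$ and $w$, together with the Big-$M$ constraint, are untouched by removing the high-index blocks, so that feasibility of the retained variables is preserved.
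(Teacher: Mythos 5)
Your proposal is correct and follows essentially the same argument as the paper: both take an arbitrary feasible solution of \eqref{tlfFEwP}, retain only the $(\pi^k,\theta^k)$ blocks associated with the selected plans $\{z^1,\ldots,z^n\}$ along with the shared variables $\eta$, $y$, and $w$, observe that the resulting point is feasible for \eqref{tlfPEwP} with unchanged objective value, and conclude the lower-bound inequality. Your explicit remarks on the differing variable spaces and the relabeling of indices are just a more careful writing of the paper's implicit projection step.
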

\begin{proof}
\sloppy Let $y \in Y$ and $\{z^1, \ldots, z^{n}\} \subseteq \bigcup_{y \in Y} Z(y)$, and consider a feasible solution to \eqref{tlfFEwP}, $(\hat{\eta},\hat{y},\hat{\pi}^+,\hat{\pi}^-,\hat{\theta})$. Since $\{z^1, \ldots, z^{n}\} \subseteq \bigcup_{y \in Y}Z(y)$, there are variables $(\hat{\pi}^{+^{z^1}}, \ldots, \hat{\pi}^{+^{z^{n}}}, \hat{\pi}^{-^{z^1}}, \ldots, \hat{\pi}^{-^{z^{n}}}, \hat{\theta}^{z^1}, \ldots, \hat{\theta}^{z^{n}})$ associated with $\{z^1, \ldots, z^{n}\}$. The solution $(\hat{\eta},\hat{y},\hat{\pi}^{+^1}, \ldots, \hat{\pi}^{+^{n}}, \hat{\pi}^{-^1}, \ldots, \hat{\pi}^{-^{n}}, \hat{\theta}^1, \ldots, \hat{\theta}^{n})$ is a feasible solution in \eqref{tlfPEwP}. Since this solution is feasible in \eqref{tlfPEwP}, we know $\eta^* \le \hat{\eta}$. Thus, a feasible solution to \eqref{tlfPEwP} can be created from any solution to \eqref{tlfFEwP}. Thus, the objective value of \eqref{tlfPEwP} is a lower bound on the objective value of \eqref{tlfFEwP}.
\end{proof}

Once we have solved \eqref{tlfPEwP} for optimal first stage decision $\hat{y}$, we do not yet know if any $z^k$ is the optimal response to $\hat{y}$. We can easily compute the maximization decisions $\hat{z}$ that are the optimal response to $\hat{y}$. We do so by solving the following:
\vspace{-8mm}
\begin{singlespace}
\begin{align}
    \label{OptResponseSP}
    \max_{x, z} ~~~ & \sum_{(s,j) \in A \cup A^R} x_{sj} \nonumber\\  
    \text{s.t.} \hspace{.050cm }& \sum_{(j, i) \in A \cup A^R} x_{ji} -  x_{i} = 0 & \text{ for } i \in N \setminus \{s,t\} \nonumber\\
    & x_i - \sum_{(i, j) \in A \cup A^R} x_{ij} = 0 & \text{ for } i \in N \setminus \{s,t\} \nonumber\\
    & 0 \leq x_{ij} \le u_{ij} & \text{ for } (i,j) \in A\\
    & 0 \leq x_{i} \le u_i (1 - \hat{y}_{i}) & \text{ for } i \in N \setminus \{s,t\} \nonumber\\
    & 0 \leq x_{ij} \le u_{ij}z_{ij} & \text{ for } (i,j) \in A^R \nonumber \\
    & z \in Z(\hat{y}) \nonumber
\end{align}
\end{singlespace}

When we solve this problem, we recover a bilevel feasible solution. Thus, the optimal objective value of this program is an upper bound on the true solution. Further, we can introduce its optimal solution $\hat{z}$ as a new point in the partial enumeration. Thus, we can iteratively find upper bounds $U^k$ and lower bounds $L^k$ on the true objective value of \eqref{model1}. We stop the algorithm when $U^k - L^k \le \epsilon$, where $\epsilon \ge 0$. If we choose $\epsilon = 0$, we solve the problem exactly. We now introduce the column and constraint generation procedure for MFNIP-R as Algorithm \ref{alg:cncg}.

\begin{algorithm}
\caption{C\&CG for MFNIP-R}
\label{alg:cncg}
\begin{algorithmic}
\STATE{\textbf{Initialize:}} lower bound $L = - \infty$, upper bound $U = \infty$, optimal interdiction decisions $y^*=0$, optimal restructuring decisions $z^1 = z^* = 0$, error tolerance $\epsilon \ge 0$. iteration counter $n=1$.
 \WHILE{$U - L > \epsilon$}
\STATE{\textbf{Step 1.}} Solve \eqref{tlfPEwP} for optimal interdiction decision $y^n$ and objective value $\eta^n$. Set $L = \eta^n$.
\IF{$U - L \le \epsilon$}
 \STATE{terminate; $y^*$ and $z^*$ are the optimal decisions with objective value $U$.}
\ENDIF
\STATE{\textbf{Step 2.}} Input $y^n$ into \eqref{OptResponseSP} as data and solve \eqref{OptResponseSP} for optimal restructuring decisions $z^{n+1}$ and objective value $\sum_{(s,j) \in A \cup A^R} x_{sj}$.
\IF{$x_{ts} < U$}
 \STATE{Let $y^* = y^n$, $z^* = z^{n+1}$, $U = \sum_{(s,j) \in A \cup A^R} x_{sj}$}.
\ENDIF
\STATE{\textbf{Step 3.}} Include constraints corresponding to $z^{n+1}$ in \eqref{tlfPEwP}, create variables $\pi^{n+1}$, $\theta^{n+1}$, $w^{n+1}$, set $n = n + 1$, return to Step 1. 
\ENDWHILE
\end{algorithmic}
\end{algorithm}

\begin{thrm}
\label{thrm:correct}
Algorithm \ref{alg:cncg} converges to the optimal solution of \eqref{tlfFE} when $\epsilon=0$.
\end{thrm}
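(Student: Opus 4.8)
The plan is to run the standard column-and-constraint generation convergence argument, resting on three pieces: a valid lower bound at every iteration, a valid upper bound at every iteration, and finiteness of the column pool $\bigcup_{y\in Y}Z(y)$, together with a lemma showing that a repeated optimal response forces the two bounds to meet. Throughout, let $\mathrm{OPT}$ denote the optimal value of \eqref{tlfFEwP}, and for $y\in Y$ let $\phi(y)$ denote the defender's true optimal value $\max_{z\in Z(y)}(\text{max flow under }(y,z))$, so that $\mathrm{OPT}=\min_{y\in Y}\phi(y)$.

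First I would observe that the Step 1 values are valid lower bounds. At iteration $n$ the master \eqref{tlfPEwP} is exactly the partial-enumeration problem over the generated subset $\{z^1,\dots,z^n\}\subseteq\bigcup_{y\in Y}Z(y)$, so Theorem \ref{thrm:LB} gives $L=\eta^n\le\mathrm{OPT}$. I would also note $L$ is nondecreasing: adding the column $z^{n+1}$ in Step 3 introduces fresh variables $\pi^{n+1},\theta^{n+1}$ appearing only in the newly added constraints, so no existing constraint is relaxed while a new lower bound on $\eta$ is imposed, and the optimal value of the minimization master cannot decrease. For the upper bound, Step 2 solves \eqref{OptResponseSP}, which for fixed $y^n$ returns the defender's true optimal response, i.e. $\hat{\eta}^n=\phi(y^n)\ge\mathrm{OPT}$; since $U=\min_{m\le n}\hat{\eta}^m$, the sequence $U$ is nonincreasing and valid. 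Hence $L\le\mathrm{OPT}\le U$ at all times, and whenever the loop halts with $U-L\le\epsilon=0$ we get $\mathrm{OPT}=L=U$, so the stored bilevel-feasible pair $(y^*,z^*)$ attains $\mathrm{OPT}$ and is optimal for \eqref{tlfFEwP}.

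The crux is finite termination. Since $\bigcup_{y\in Y}Z(y)$ is finite, only finitely many distinct columns can ever be generated, so if the loop ran past $|\bigcup_{y\in Y}Z(y)|$ iterations the responses $z^2,z^3,\dots$ would repeat: there is an $n$ with $z^{n+1}=z^{m+1}$ for some $m<n$, and $z^{m+1}$ is already a column of the master solved at iteration $n$. I would then prove the key lemma that in this case $\eta^n=\phi(y^n)$, which forces $L=U$ and triggers termination. The direction $\eta^n\le\phi(y^n)$ always holds: given $y^n$, the Big-$M$ constraint pins each $w_{ij}$ to the correct feasibility indicator, and the master minimizes each $\theta^k$ independently, so $\eta^n=\max_k(\text{max flow under }(y^n,\hat{z}^k))$ over the projected plans $\hat{z}^k\in Z(y^n)$, each of which is dominated by $\phi(y^n)$. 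The reverse inequality is where the partial-information machinery does the work: because $z^{n+1}$ is the optimal response to $y^n$, every restructured arc of $z^{n+1}$ is feasible under $y^n$, so the $w$-projection leaves it unchanged, $\hat{z}^{n+1}=z^{n+1}$, and the constraints of \eqref{tlfPEwP} attached to that column reduce exactly to the cut formulation \eqref{tlf} for $(y^n,z^{n+1})$; by the feasible-cut/min-cut correctness behind Theorem \ref{thrm:equiv}, their minimum equals $\phi(y^n)$, forcing $\eta^n\ge\phi(y^n)$.

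I expect this reverse inequality to be the main obstacle. One must check carefully that the relaxed right-hand sides $z^k_{ij}+w_{ij}-2$ (and $z^k_{sj}+w_{sj}-1$) in \eqref{tlfPEwP}, together with the $w$-projection, genuinely reproduce the feasible-cut value of the projected plan $\hat{z}^k$ for the current $y^n$ rather than some weaker bound, and in particular that a present optimal-response column is evaluated at its full value $\phi(y^n)$. This rests on the monotonicity of $Z(y)$ (ensuring $\hat{z}^k\in Z(y^n)$) and on the same case analysis over $\theta_{ij}\in\{0,1\}$ used in Theorem \ref{thrm:equiv}. Once the lemma is established, the bound bookkeeping of the first two paragraphs yields both finite termination and optimality of the returned solution.
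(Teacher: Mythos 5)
Your proposal is correct and takes essentially the same route as the paper's proof: valid lower bounds via Theorem \ref{thrm:LB}, valid upper bounds from the bilevel-feasible solutions of \eqref{OptResponseSP}, finiteness of $Y$ and $\bigcup_{y \in Y} Z(y)$, and the key observation that once the optimal response to the current interdiction plan is already a column, the $w$-projection leaves it intact and the master evaluates that plan at its true value, forcing $L \ge U$ and termination. The only cosmetic differences are that you pigeonhole on repeated restructuring columns $z^{n+1}$ where the paper pigeonholes on repeated interdiction plans $y$ (asserting $\eta^j = \hat{\eta}^i$ in one line where you verify the projection/feasible-cut mechanics explicitly), and you establish terminal correctness by the direct sandwich $L \le \mathrm{OPT} \le U$ where the paper argues by contradiction.
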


\begin{proof}
First, observe that if $\epsilon=0$, we need $L\ge U$ for Algorithm \ref{alg:cncg} to terminate, meaning we need $\eta^n \ge \min\{\hat{\eta}^1, \ldots, \hat{\eta}^{n-1}\}$. Suppose that a solution $\hat{y}$ that is optimal in \eqref{tlfPEwP} is visited twice, at iterations $i$ and $j$, where $i < j$. The algorithm will terminate, since, by the first set of constraints in \eqref{tlfPEwP}, $\eta^j = \hat{\eta}^i \ge \min\{\hat{\eta}^1, \ldots, \hat{\eta}^{n-1}\}$. Let $y^*$ be the optimal solution to the full enumeration problem with objective value $\eta^*$, and suppose that Algorithm \ref{alg:cncg} never visits $y^*$. First, consider the case where the algorithm terminates with a solution $\hat{y}$ such that $\hat{\eta} < \eta^*$. Since the algorithm terminated, we found a bilevel feasible solution $(\hat{y}, \hat{z})$ with objective value $\hat{\eta}$. Thus $y^*$ is not the optimal solution, creating a contradiction. Now consider the case where the algorithm terminates with a solution with objective value $\hat{\eta} > \eta^*$. Then there must be some iteration $k$ such that $\eta^k > \eta^*$. By Theorem \ref{thrm:LB}, we can construct a corresponding solution to \eqref{tlfPEwP} from the optimal solution to \eqref{tlfFEwP}. Since this solution is feasible in \eqref{tlfPEwP}, $\eta^* \ge \eta^k$, creating a contradiction. Thus, the optimal solution must be visited by Algorithm \eqref{alg:cncg}. Since the sets $Y$ and $\bigcup_{y \in Y} Z(y)$ are finite sets, we know that the optimal solution will be reached in finite time.
\end{proof}

From Step $3$ of Algorithm \ref{alg:cncg}, we can revisit why we choose to derive the problem from \eqref{tlf} instead of \eqref{tlforig}. If we were to use \eqref{tlforig}, we would have bilinearities in $y_{i} \theta^k_{i}$ for $i \in N$ for every iteration $k$. Additionally, including $w^{in,k}$ and $w^{out,k}$ in this model would result in more bilinear terms $z_{ij}^{in,k} w^{in,k}_{ij} \theta^k_{ij}$ and $z_{ij}^{out,k} w^{out,k}_{ij} \theta^k_{ij}$ for all $(i,j) \in A^R$ for every iteration $k$. Each of these bilinearities would need to be linearized with the McCormick inequalities, resulting in $|N|+2|A^R|$ new variables and $O(|N|+2|A^R|)$ new constraints each iteration. These additional variables and constraints can cause the solve time of each iteration to take considerably longer due to the larger problem size.

We note that our idea for utilizing partial information can be modified to fit into other general BIP solvers. The main idea is understanding how previously visited lower level solutions impact the objective value while exploring upper level solutions. As long as we can define linear constraints to determine what integer components of a lower level solution remain feasible, and appropriately adjust the continuous components to maintain feasibility, when changing the upper level solution, we can identify non-trivial feasible lower level solutions that provide a better lower bound on the true objective value. This would most easily be done when $Z(y)$ is monotonic, as a feasible solution can be constructed by setting infeasible non-zero variables to be $0$. Upon reaching a bilevel feasible solution $(y,z)$, we can add new constraints to the problem determining what a non-trivial sub plan $\hat{z}$ with respect to other upper level decisions $\hat{y}$, updating each non-fathomed node in the branch-and-bound tree. Doing so will allow these solvers to better account for the impact of the lower level problem on the objective value of the BIP as the branching procedure occurs.

\subsection{Special Case for Constraints on $w$}
\label{ssec:special}
In the case where the maximum number of restructuring arcs leaving and entering $i$ is one (i.e., $l_i=s_i=1$) for all $i \in N$, we can define a different set of constraints to capture restructuring. Instead of defining $w^z$ based on when an arc can appear in each given restructuring plan, we can define $w_{ij}$ as the indicator of when an arc $(i,j)$ can be restructured based on the current interdiction plan. This means that when a given node $i$ is interdicted, every arc $(u,v)$ that can be restructured due to this interdiction will have $w^{in}_{uv}= 1$ or $w^{out}_{uv} = 1$ as appropriate. Since, for each previous plan, at most one of each arcs into (and out of) each node will have been restructured, at most one arc into (and out of) each node will have both $z_{uv}^{in,k} = 1$ and $w_{uv}^{in} = 1$ (or $z_{uv}^{out,k} = w_{uv}^{out} = 1$, respectively), guaranteeing feasibility. This eliminates the need for $\delta$. Additionally, since we can only restructure one arc into and out of each node, we no longer need to accurately capture how many arcs can be restructured for each node, but instead whether the node as the ability to restructure or not. This allows us to also eliminate $\kappa$. Suppose there exist integers $a_i, b_i$ such that $q_i = \frac{a_i}{b_i}$ for each node $i$. We can instead define our constraints on $w^{in}$ and $w^{out}$ as the following:
\begin{singlespace}
\begin{align}
    a_i |\{l:(l,j) \in A\}| w^{in}_{ij} \ge a_i \sum_{l:(l,j) \in A} y_{l} - b_i + 1  &\text{ for } (i,j) \in A^R \\
    a_i |\{l:(l,j) \in A\}| w^{out}_{ij} \ge a_i \sum_{l:(l,j) \in A} y_{l} - b_i + 1  &\text{ for } (i,j) \in A^R 
\end{align}
\end{singlespace}
Note that, if $q_i \ge 1$, we could replace $a_i$ and $b_i$ with $1$, as only one interdiction is needed to allow for the one allowed restructuring. Otherwise, when $q_i < 1$, $w^{in}_{ij}$ will only be $1$ when $a_i \sum_{l:(l,j) \in A} y_{l} \ge b_i$, which only occurs when $q_i \sum_{l:(l,j) \in A} y_{l} \ge 1$. This formulation allows us to generate all needed $w$ variables upfront, as opposed to generating more with each newly identified restructuring plan. In this case, constraints (25) and (26) would be inserted into \eqref{tlfFEwP} in place of constraints (16) - (21).

\section{Computational Results: Overview and Comparison to Other C\&CG Methods}
\label{compare}
We test our algorithm on five generated networks modeling city-level drug trafficking networks with $200$ users, over $10$ different attacker budgets and a fixed defender budget. These networks are generated following the procedure of Malaviya et al.\ \cite{multi}. The minimum budget tested allows for interdicting at least one node in the second highest tier for four of the five data sets, which is satisfied by the second smallest budget. These networks were generated following the procedure outlined in \cite{multi}, each consisting of six tiers. Restructurable arcs between tiers are created by generating the lists of arcs between each pair of sequential tiers, only using nodes currently in the network, then randomly selecting from each list. This guarantees that each pair of sequential tiers has restructurable arcs between them. This constitutes our base data sets. Table \ref{tbl:size} presents the size of these base data sets.

\begin{table}[h]
\begin{center}
\begin{tabular}{|c|c|c|c|}
\hline
Network & $|N|$ & $|A|$ & $|A^R|$ \\ \hline
1 & 262 & 904 & 128 \\ \hline
2 & 260 & 916 & 130 \\ \hline
3 & 261 & 917 & 131 \\ \hline
4 & 261 & 886 & 126 \\ \hline
5 & 262 & 899 & 127 \\ \hline
\end{tabular}
\caption{Size of base data sets}
\label{tbl:size}
\end{center}
\end{table}

From these base sets, we provide two adaptations. The first adaptation is the inclusion of recruitable participants, members who are not yet in the network but can be brought in to the network. The number of ``recruitable" users, dealers and safe houses is $20\%$ of the number of existing users, dealers and safe houses, respectively. These nodes are connected to the lower tiers of the network similar to \cite{multi}. All ``recruitable" users and dealers are randomly connected to nodes in the tier above them. ``Recruitable" safe house nodes are assigned arcs and restructurable arcs based on an existing safe house, to act as a backup should the existing safe house be interdicted. These generation methods also guarantee that each of these ``recruitable" participants has a path from the source through arc restructuring. The second adaptation we call organizational restructuring. This allows for the restructuring of arcs that skip a tier, emulating the promotion of a participant to a higher-level within the network. It also includes the recruitment of participants from other organizations. For our experiments, we set $k_i=1$ for all $i \in N$, $l_i=1$ for all $i \in N$, $a_{ij}=1$ for all $(i,j) \in A^R$, $l=1$ and $r=6$, allowing us to use the constraint constructed in Section \ref{ssec:special}. 

For the master problem, there will be approximately $|N|+2k|A^R|$ binary variables, $(2|N|+|A|+2|A^R|)k$ continuous variables, and $(|N|+|A|+4|A^R|)k$ constraints, where $k$ is the number of restructuring plans being considered. We note that, by the nature of the problem structure, there will be $k$ copies of the minimum cut problem in the master problem, one for each restructuring plan being considered. Future work could explore how to disaggregate the problem of determining the optimal interdiction decisions and corresponding projected restructuring decisions from the $k$ disjoint minimum cut problems in order to improve solution times. For the sub-problem, there will be approximately $2|A^R|$ binary variables, $|N|+|A|+|A^R|$ continuous variables, and $2|N|$ constraints.

Climbing the ladder constraints are included to enforce that interdictions start at nodes lower in the network before nodes higher in the network can be interdicted, similar to how law enforcement would investigate a drug trafficking network. The number of interdicted neighbors for the climbing the ladder constraints is determined as per Malaviya et al.\ \cite{multi}. The cost for interdicting a node in the climbing the ladder constraint is determined by aggregating the targeting requirement and arrest requirement values from Malaviya et al.\ \cite{multi}. We again note that interdicting is not solely law enforcement arresting participants, but can be any sort of intervention by an interested stakeholder. We can more appropriately think of the targetting and arrest requirements as the cost of observing participants and implementing interdictions, respectively. 

We limit the solve time of each experiment to two hours, with $\epsilon = 10^{-4}$. If an instance were to exceed the two hour solve time, we use the last iteration completed within the time limit to determine the lower and upper bounds. Additionally, after completing an iteration, we check if the total solve time plus the solve time of the most recent iteration is under two hours, and terminate if it is not. Since the problem grows in size each iteration, we expect that each subsequent iteration will be at least as long as the previous one, thus terminating early if we expect the next iteration to terminate after the time limit would be exceeded. We model the problem in AMPL with Gurobi 9.0.2 as the solver. Experiments were conducted on a laptop with an Intel\textsuperscript{\textregistered} Core\textsuperscript{TM} i5-8250 CPU @ 1.6 GHz - 1.8 GHz and 16 GB RAM running Windows 10.

We first compare solve time with the method described in \cite{cncg2}. Before doing so, we note that the KKT-conditions-based tightening cannot be applied to our problem. This tightening tries to improve the lower bound on the objective value of the upper level problem by having the upper level decision maker (attacker) try to predict the lower level decision maker's (defender's) integer decisions (restructuring decisions), and identifies the optimal continuous variables in the lower level problem. In our problem, for any interdiction plan, a restructuring plan of $z = 0$ is always feasible. Since we are in a minimization problem, the null plan will always be selected for these constraints, resulting in the lower bound given by the KKT-conditions-based tightening being the interdicted flow. If we were to enforce that restructuring needs to occur, we may encounter the scenario where an interdiction plan does not allow for restructuring to occur; this would make the master problem infeasible.

To implement their projection scheme, we extend our $w$ variables to determine the full feasibility of a restructuring plan. For each iteration $k$, let 
\vspace{-8mm}
\begin{singlespace}
\begin{equation*}
f^k = \begin{cases} 1 \text{ if the } k^{th} \text{ restructuring plan is feasible for current interdiction decision } y \\
0 \text{ otherwise}.
\end{cases}
\end{equation*}
\end{singlespace}
We can enforce that $f^k$ takes the correct value with the following constraint:
\vspace{-8mm}
\begin{singlespace}
\begin{align}
    f^k &\ge \sum_{(i,j) \in A^R: z^{out,k}_{ij} = 1} w_{ij}^{out,k} - |\{(i,j) \in A^R: z^{out,k}_{ij} = 1\}| \\
    &+ \sum_{(i,j) \in A^R: z^{in,k}_{ij} = 1} w_{ij}^{in,k} - |\{(i,j) \in A^R: z^{in,k}_{ij} = 1\}| + 1\text{.} \nonumber
\end{align}
\end{singlespace}
With this constraint, if all arcs that were restructured in the $k^{th}$ restructuring plan are able to be restructured, then the sums well be equivalent to the set cardinalities, and thus $f^k \ge 1$. Otherwise, the constraint reduces to $f^k \ge c$ with $c \le 0$, resulting in $f^k = 0$ being selected due to solving a minimization problem. Additionally, instead of $w^{in}_{ij}$ or $w^{out}_{ij}$ being included in the constraints to indicate the side of the cut each node is on, $f^k$ is included in its place.

Over the $250$ instances, our method only fails to solve three instances to optimality within two hours. All three of these instances are instances with the recruitment of new participants and have medium attacker budget levels. For simplicity of comparison, we compare our method against the algorithm of \cite{cncg2} for just the base instances and the instances with recruitment of new participants.

For the base instances, we observe that, when the algorithm of \cite{cncg2} is able to solve the problem, it typically requires more plans than Algorithm \ref{alg:cncg} to find the optimal solution. However, when both methods require a similar number of plans needed to solve the problem, our method is generally slower than that of the algorithm of \cite{cncg2}. We suspect this is due to our method needing to explore branch-and-bound nodes where parts of previously visited restructuring plans may be relevant. However, when our method needs less plans to find the optimal solution, we are able to solve the problem significantly faster (e.g., for the majority of instances for Data3, we require well under 20 minutes while their method hits the 2 hour upper bound). Table \ref{tbl:compareTimeBase} reports the run times and number of plans visited by each method.

\begin{table}[]
\begin{center}
\resizebox{\textwidth}{!}{
\begin{tabular}{|c|c|c|c|c|c|c|c|c|c|c|}
\hline
Budget & Data1, Alg1          & Data1, \cite{cncg2}     & Data2, Alg1          & Data2, \cite{cncg2}             & Data3, Alg1            & Data3, \cite{cncg2}    & Data4, Alg1          & Data4, \cite{cncg2}              & Data5, Alg1          & Data5, \cite{cncg2}       \\ \hline\hline
50     & \textbf{110.875 (3)} & 561.094 (6)          & \textbf{10.938 (3)}  & 73.079 (7)          & \textbf{25.236 (3)}    & 7200 (18)* & \textbf{18.86 (3)}   & 103.125 (6)          & \textbf{139.688 (3)} & 1282.75 (8)   \\ \hline
60     & \textbf{18.844 (3)}  & 55.328 (5)           & \textbf{5.672 (2)}   & 38.75 (4)           & \textbf{521.078 (8)}   & 7200 (17)* & \textbf{23.75 (3)}   & 399.781 (10)         & \textbf{70.422 (2)}  & 6132.687 (14) \\ \hline
70     & \textbf{12.781 (2)}  & 115.922 (5)          & \textbf{22.968 (3)}  & 181.234 (7)         & \textbf{449.516 (7)}   & 7200 (17)* & \textbf{51.765 (4)}  & 260.844 (8)          & \textbf{177.031 (3)} & 2139.485 (8)  \\ \hline
80     & 48.313 (3)           & \textbf{43.937 (3)}  & 47.078 (3)           & \textbf{41.594 (3)} & \textbf{1316.953 (7)}  & 7200 (15)* & \textbf{104.438 (4)} & 2388.859 (12)        & \textbf{170.422 (3)} & 7200 (14)*    \\ \hline
90     & \textbf{45.203 (2)}  & 118.797 (3)          & \textbf{122.359 (3)} & 155.86 (5)          & \textbf{688.641 (5)}   & 7200 (13)* & \textbf{139.515 (4)} & 533.484 (7)          & \textbf{56.063 (2)}  & 2843.36 (11)  \\ \hline
100    & 61.188 (2)           & \textbf{33.953 (2)}  & 95.031 (3)           & \textbf{75.468 (4)} & \textbf{69.656 (2)}    & 7200 (12)* & 722.641 (5)          & \textbf{282.453 (5)} & \textbf{210.328 (3)} & 7200 (9)*     \\ \hline
110    & \textbf{80.078 (2)}  & 179.234 (3)          & \textbf{3.219 (2)}   & 4.438 (3)           & \textbf{118.578 (2)}   & 7200 (11)* & \textbf{409.906 (4)} & 569.954 (5)          & \textbf{312.594 (3)} & 7200 (11)*    \\ \hline
120    & 378.562 (3)          & \textbf{240.813 (3)} & 0.156 (1)            & \textbf{0.141 (1)}  & \textbf{90.485 (2)}    & 7200 (12)* & 968.485 (6)          & \textbf{910.281 (7)} & \textbf{97.468 (2)}  & 4206.719 (10) \\ \hline
130    & 636.907 (3)          & \textbf{329.656 (3)} & \textbf{0.219 (1)}   & 0.578 (2)           & \textbf{1893.594 (7)}  & 7200 (13)* & \textbf{537.984 (6)} & 884.829 (7)          & \textbf{219.719 (3)} & 7200 (12)*    \\ \hline
140    & 14.343 (2)           & \textbf{3.813 (2)}   & 0.141 (1)            & \textbf{0.094 (1)}  & \textbf{4832.516 (12)} & 7200 (13)* & 62.437 (4)           & \textbf{9.546 (3)}   & \textbf{758.61 (5)}  & 3095.734 (9)  \\ \hline
\end{tabular}}
\end{center}
\caption{Comparison of run times (seconds) and number of plans visited of Algorithm \ref{alg:cncg} and the algorithm of \cite{cncg2} in base restructuring instances}
\label{tbl:compareTimeBase}
\end{table}

For the instances when recruitment of outside participants is allowed, we are able to more clearly see the advantage in our method over that of the algorithm of \cite{cncg2}. While our method is able to solve all but three instances, their method is only able to solve $9$ of the $50$ instances within the specified time limit.  We further observe that an extremely large percentage (38 out of 41) of our run times are under an hour and a large percentage (33 out of 41) are under half an hour for instances where they fail to solve the problem to optimality. Additionally, their method visits a significantly larger number of plans while still leaving a large relative optimality gap. Table \ref{tbl:compareTimeRec} compares the run times and number of plans visited by both methods. These instances are marked with an asterisk in Table \ref{tbl:compareTimeRec}. We additionally report the relative optimality gap of these instances as the difference between the lower and upper bound divided by the upper bound in Table \ref{tbl:optGapRec}.  These results demonstrate the significant importance of our cuts that use partial information about previous interdiction plans and the restructuring response.   

\begin{table}[]
\begin{center}
\resizebox{\textwidth}{!}{
\begin{tabular}{|c|c|c|c|c|c|c|c|c|c|c|}
\hline
Budget & Data1, Alg1          & Data1, \cite{cncg2}     & Data2, Alg1          & Data2, \cite{cncg2}             & Data3, Alg1            & Data3, \cite{cncg2}    & Data4, Alg1          & Data4, \cite{cncg2}              & Data5, Alg1          & Data5, \cite{cncg2}       \\ \hline\hline
50     & \textbf{55.5 (3)}     & 7200 (18)*          & \textbf{37.75 (4)}   & 7200 (35)*        & \textbf{211.891 (6)}  & 7200 (12)*   & \textbf{70.407 (4)}   & 7200 (29)*    & \textbf{161.484 (5)}   & 7200 (17)* \\ \hline
60     & \textbf{70.172 (4)}   & 7200 (16)*          & \textbf{48.11 (4)}   & 7200 (25)*        & \textbf{62.687 (4)}   & 7200 (15)*   & \textbf{37.453 (2)}   & 7200 (20)*    & \textbf{166.766 (4)}   & 7200 (14)* \\ \hline
70     & \textbf{344.578 (6)}  & 7200 (17)*          & \textbf{45.953 (3)}  & 7200 (13)*        & \textbf{114.157 (4)}  & 7200 (14)*   & \textbf{129.265 (4)}  & 7200 (24)*    & \textbf{520.172 (5)}   & 7200 (13)* \\ \hline
80     & \textbf{575.875 (8)}  & 7200 (18)*          & \textbf{163.906 (4)} & 7200 (14)*        & \textbf{1421.5 (6)}   & 7200 (12)*   & \textbf{1463.5 (7)}   & 7200 (15)*    & \textbf{2379.765 (8)}  & 7200 (13)* \\ \hline
90     & \textbf{1799.312 (5)} & 7200 (12)*          & \textbf{179.922 (5)} & 7200 (9)*         & \textbf{3364.844 (6)} & 7200 (10)*   & 7200 (8)*             & 7200 (12)*    & \textbf{1892. 297 (7)} & 7200 (13)* \\ \hline
100    & \textbf{1543.235 (5)} & 7200 (11)*          & \textbf{962.062 (5)} & 7200 (5)*         & \textbf{720.406 (4)}  & 7200 (10)*   & \textbf{2831.515 (6)} & 7200 (11)*    & \textbf{2406.187 (7)}  & 7200 (13)* \\ \hline
110    & \textbf{1288.687 (5)} & 7200 (9)*           & \textbf{260.438 (4)} & 7200 (10)*        & \textbf{705.374 (4)}  & 7200 (10)*   & \textbf{3586.438 (7)} & 7200 (11)*    & 7200 (8)*              & 7200 (13)* \\ \hline
120    & \textbf{2373.344 (6)} & 7200 (10)*          & \textbf{527.531 (5)} & 2069.875 (10)     & \textbf{851 (4)}      & 7200 (9)*    & \textbf{718.875 (3)}  & 5169 (10)     & 7200 (8)*              & 7200 (13)* \\ \hline
130    & \textbf{161.641 (2)}  & 1394 .063(6)        & \textbf{67.265 (4)}  & 152.109 (11)      & \textbf{595.157 (4)}  & 7200 (5)*    & \textbf{4260 (8)}     & 5744.562 (11) & \textbf{644.422 (4)}   & 7200 (11)* \\ \hline
140    & 28.375 (2)            & \textbf{16.547 (2)} & 0.141 (1)            & \textbf{0.11 (1)} & \textbf{249.781 (3)}  & 3094.141 (9) & \textbf{714.016 (5)}  & 921.797 (6)   & \textbf{457.531 (3)}   & 7200 (11)* \\ \hline
\end{tabular}}
\end{center}
\caption{Comparison of run times (seconds) and number of plans visited of Algorithm \ref{alg:cncg} and the algorithm of \cite{cncg2} in instances with recruitment}
\label{tbl:compareTimeRec}
\end{table}

\begin{table}[]
\begin{center}
\resizebox{\textwidth}{!}{
\begin{tabular}{|c|c|c|c|c|c|c|c|c|c|c|}
\hline
Budget & Data1, Alg1          & Data1, \cite{cncg2}     & Data2, Alg1          & Data2, \cite{cncg2}             & Data3, Alg1            & Data3, \cite{cncg2}    & Data4, Alg1          & Data4, \cite{cncg2}              & Data5, Alg1          & Data5, \cite{cncg2}       \\ \hline\hline
50     & -           & 12.764\% & -           & 27.447\% & -           & 15.238\% & -           & 12.993\% & -           & 8.915\%  \\ \hline
60     & -           & 18.710\% & -           & 33.073\% & -           & 14.314\% & -           & 23.405\% & -           & 15.194\% \\ \hline
70     & -           & 21.093\% & -           & 33.642\% & -           & 18.612\% & -           & 17.951\% & -           & 20.278\% \\ \hline
80     & -           & 26.672\% & -           & 39.175\% & -           & 19.186\% & -           & 38.355\% & -           & 23.868\% \\ \hline
90     & -           & 32.142\% & -           & 56.361\% & -           & 20.347\% & 0.637\%     & 38.368\% & -           & 31.891\% \\ \hline
100    & -           & 36.823\% & -           & 69.444\% & -           & 28.804\% & -           & 35.693\% & -           & 37.486\% \\ \hline
110    & -           & 29.415\% & -           & 44.209\% & -           & 39.164\% & -           & 10.904\% & 0.929\%     & 46.313\% \\ \hline
120    & -           & 39.628\% & -           & -        & -           & 46.850\% & -           & -        & 7.865\%     & 52.938\% \\ \hline
130    & -           & -         & -           & -        & -           & 63.277\% & -           & -        & -           & 59.492\% \\ \hline
140    & -           & -        & -           & -        & -           & -        & -           & -        & -           & 65.755\% \\ \hline
\end{tabular}}
\end{center}
    \caption{Relative optimality gaps of unsolved instances of Algorithm \ref{alg:cncg} and the algorithm of \cite{cncg2} in instances with recruitment}
    \label{tbl:optGapRec}
\end{table}

\section{Computational Results: Application-based Analysis}
\label{analysis}
In our tests, we examine the limitations of applying the interdiction plan determined to be optimal by solving the traditional MFNIP. This will help determine the importance of accounting for restructuring in the context of disrupting drug trafficking networks. In each network, we observed that the optimal solution value of MFNIP-R is less than that of the optimal restructuring to MFNIP's optimal interdictions. In most instances, the increase from the MFNIP solution to the MFNIP-R solution is less than half of the increase from the MFNIP solution to the optimal restructuring after the MFNIP solution. We demonstrate the differences in these values with figures of one of the data sets, as the patterns we can observe from it are representative of all five data sets. Figure \ref{fig:200_1_base} compares flow values against the attacker budget in our base instance, which only allows for arcs to be restructured from a node in a given tier to a node in a tier immediately above or below it. Figure \ref{fig:200_1_wr} presents this comparison on instances with recruitment of new participants, i.e., new nodes can be added to the network, and Figure \ref{fig:200_1_or} presents this comparison on instances with organization restructuring, i.e., new nodes can be added and arcs can skip tiers. In each plot, we include the original maximum flow through the network (Max Flow), the maximum flow from the MFNIP solution (MFNIP Flow Before Restructuring), the maximum flow with the optimal restructuring after MFNIP interdictions (MFNIP Flow After Restructuring), and the maximum flow determined by MFNIP-R (MFNIP-R Flow).

\begin{figure}[h!]
\centering
\begin{subfigure}{.45\textwidth}
  \centering
  \includegraphics[width=\linewidth]{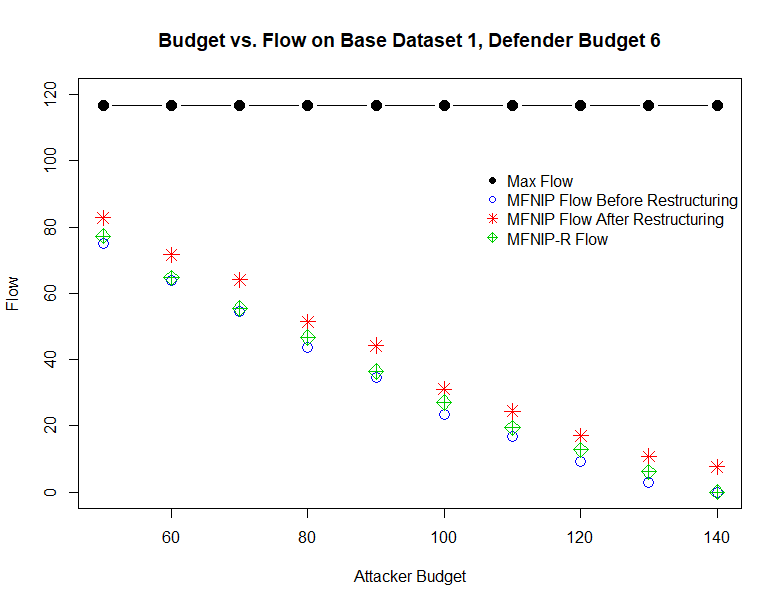}
  \caption{Base Instance}
  \label{fig:200_1_base}
\end{subfigure}%
\begin{subfigure}{.45\textwidth}
  \centering
  \includegraphics[width=\linewidth]{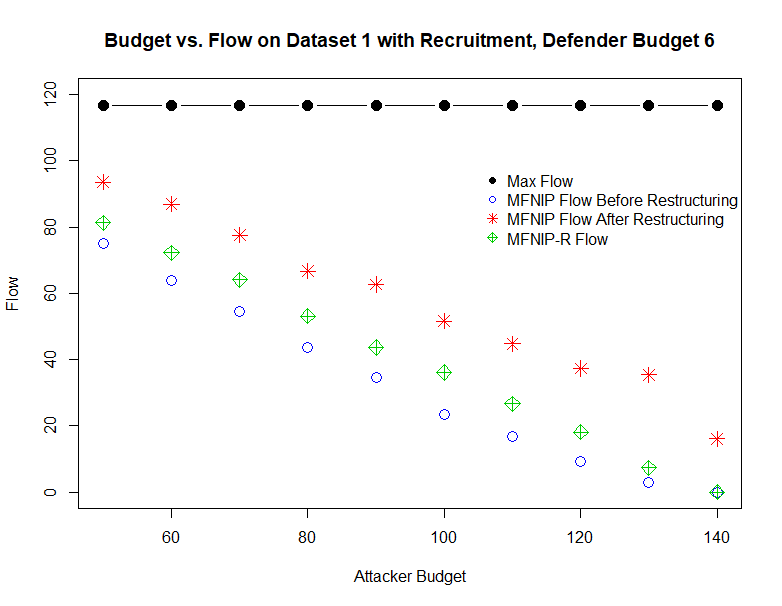}
  \caption{With Recruitment}
  \label{fig:200_1_wr}
\end{subfigure}

\begin{subfigure}{.45\textwidth}
  \centering
  \includegraphics[width=\linewidth]{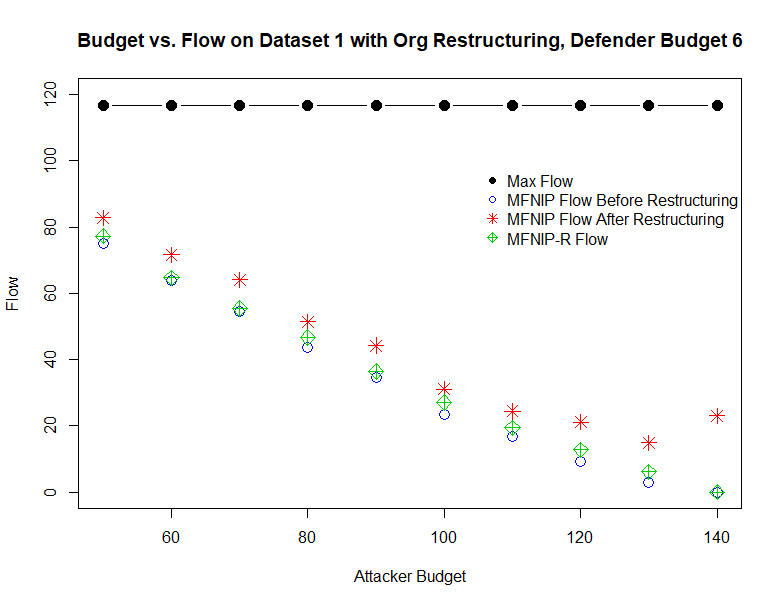}
  \caption{Organizational Restructuring}
  \label{fig:200_1_or}
\end{subfigure}%
\caption{Bilevel problem objective values over varying attacker budgets}
\label{fig:plots1}
\end{figure}

We can see that there is a significantly larger increase in flow after restructuring when recruitment of new participants is allowed. This supports the idea in criminology literature that interdicting at lower levels does not decrease the flow as much as is commonly assumed, since lower level participants are replaceable. Additionally, we note that the flow of MFNIP after restructuring with organization restructuring at an attacker budget of $140$ is larger than that flow with a budget of $130$. Because MFNIP does not account for restructuring, the interdiction plan identified using a budget of $140$ was able to initially disrupt more of the flow, but also allowed a different set of restructurings to occur. These restructurings resulted in significantly more flow in the resulting network. Even though we have a better capability to disrupt the network, this demonstrates that how we disrupt the network is just as important when we consider its ability to restructure after the interdiction. We also observed that the defender budget constraint was rarely fully used in the optimal bilevel solutions, indicating that we were also essentially solving the problem where we allow unlimited restructuring but constrain it based on where we have interdicted.

We note that there is very little difference in the flow values between the base data set and the adaptation with organizational restructuring. This is due to the fact that the interdiction decisions do not allow for these additional arcs to be allowed to be restructured by not disrupting high enough in the network. Ideally, we would want interdiction decisions to disrupt higher in the network, as we can see that disrupting lower in the network appears to reduce the flow significantly, but the presence of recruitable participants outside the network can drastically increase the flow after restructuring. To measure the impact of interdicting nodes that are in higher tiers, we include the constraint on interdicting leadership (see Appendix A for more details). Figure \ref{fig:200_1_base_il} plots the base instance flows with the interdicting leadership constraint, and Figure \ref{fig:200_1_or_il} plots the flows with organizational restructuring and the interdicting leadership constraint.

\begin{figure}[h!]
\centering
\begin{subfigure}{.45\textwidth}
  \centering
  \includegraphics[width=\linewidth]{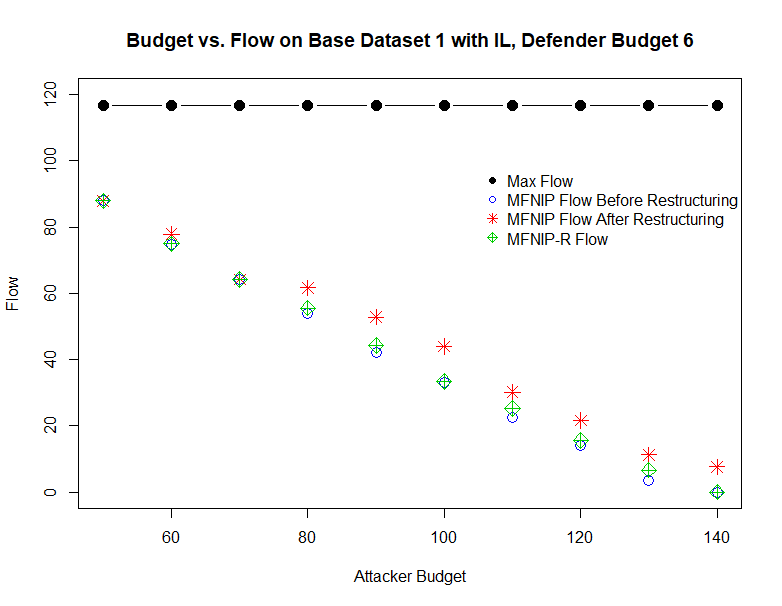}
  \caption{Base Instance}
  \label{fig:200_1_base_il}
\end{subfigure}%
\begin{subfigure}{.45\textwidth}
  \centering
  \includegraphics[width=\linewidth]{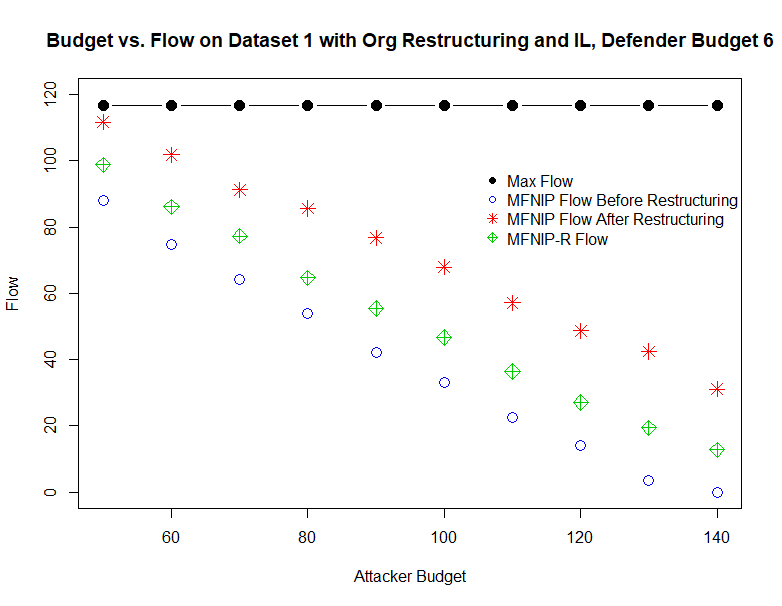}
  \caption{With Recruitment}
  \label{fig:200_1_or_il}
\end{subfigure}

\caption{Bilevel problem (with interdicting leadership) objective values over varying attacker budgets}
\label{fig:plots2}
\end{figure}

From here we can clearly see the impact of organizational restructuring. In the base instance, there was a relatively small gap between the MFNIP flow and the MFNIP flow after restructuring. However, when we allow for organization restructuring, there is a significantly larger gap, and the MFNIP-R solution is larger than in the base instances. However, we are still able to interdict in a way that mitigates the large gains in flow from restructuring after the interdictions proposed by MFNIP. This again demonstrates that how we interdict the network is much more important when we have to consider how the network will restructure after our interdictions.

In most MFNIP optimal solutions, the highest tier with interdictions is the safe house tier, the third tier from the bottom. This is due to the differences in capacities between the tiers; the flow can be decreased more drastically by interdicting nodes in the lower tiers than by using the equivalent amount of resources to interdict a node higher in the network. This is further exacerbated by the climbing the ladder constraints; it is much more costly to interdict higher in the network, further increasing the value of interdicting lower in the network. However, when recruiting outside participants is considered, we can see that more flow can be recovered than in the base instances. By interdicting more participants higher in the network, we expect that we will be able to better disrupt the network functionality, since these participants are more difficult to replace than lower level participants due to there being fewer participants in this tier. These gains from recruitment can be mitigated if efforts were made to limit lower level recruitment. By dedicating resources in ways that would decrease vulnerabilities of people to be recruited into the network, we can limit the ability of the network to replace these lower level participants, and focus on the disruption of higher level participants. Since the optimal flows in our base model are lower than the optimal flows in the model with recruitment, investing in communities to prevent recruitment into these networks would be more beneficial than reacting to recruitment of new users and dealers.

We must be careful when we interdict higher in the network when organizational restructuring is considered, as this type of restructuring allows for a significant amount of initial flow to be regained. By dedicating resources to determining which participants are most essential and least replaceable, we can exploit the vulnerabilities of the network structure to more effectively disruption the operations of the network.

\section{Conclusion and Future Work}
\label{conclude}
We define the max flow network interdiction problem with restructuring, where the defender is able to introduce new arcs to the network after interdiction decisions have been made. We demonstrate that, by not considering restructuring, interdictions can result in an \emph{increase} in flow, not a decrease as expected. Constraints on the decisions of the defender were defined based on modeling city-level drug trafficking networks. We also introduce the concepts of restructuring to recruit participants outside of the network and organizational restructuring, where participants are promoted and participants from other organizations can be recruited to a different organization. To solve this problem, we also improve upon the column-and-constraint generation procedure by utilizing partial information from infeasible restructuring plans.

For generated data on city-level drug trafficking networks consisting of $6$ tiers, we observe that the interdictions often disrupt up through the third lowest tier of the network, similar to the traditional MFNIP solution, but attain flow values less than that of the restructuring response to the optimal solution of MFNIP. This illustrates the importance of including restructuring in the network interdiction decision and makes interdiction models more aligned with reality. Without specifically requiring it, we are still mostly interdicting the ``replaceable" participants in the drug trafficking network. We suspect that, to inflict lasting damage on the operations of the network, we need to target higher up in the network. While we are able to assert more control over the flow; the optimal solution is still to not interdict the tiers of the network that are harder to replace. This is due to the fact that, for equivalent resources, flow decreases more when we interdict more nodes in lower tiers than fewer nodes in higher tiers, because of the additional costs from the climbing the ladder constraints. This leads to a future direction for research: can we define our mathematical problem to optimize some other measure while ensuring the flow is beneath some tolerable value to better disrupt the long-term operations of the network? 

We also observe that recruiting new participants and organizational restructuring in drug trafficking networks each result in a significant increase in flow after restructuring. While organizational restructuring may not occur to the extent we model, recruitment of new participants is a known issue. Our analysis suggests it is best to not only focus on disrupting the current network, but also prevent the network from recruiting participants. Future research into how to best disrupt the network and prevent the recruitment of new participants will be critical in developing comprehensive policies to addressing trafficking. Another direction of future research is learning the network. This algorithm, as well as many other solution algorithms to network interdiction problems, requires full knowledge of the network. Practically, when disrupting illicit trafficking networks, we do not have such knowledge at the beginning of our disruption efforts. Studying how we can make interdiction decisions while learning the network will be vital to the application in disrupting illicit trafficking networks.

In addition to drug trafficking, we anticipate that network interdiction will be useful in disrupting other illicit trafficking networks, such as human trafficking. More conceptual and empirical evidence is needed to apply MFNIP-R to the issue of human trafficking, particularly in the case of human trafficking for sexual exploitation (or sex trafficking). Compared to drug trafficking, it is less clear in sex trafficking networks what an interdiction might seek to restrict in terms of flow. The function of a drug trafficking network is to move and eventually sell a product (i.e. drugs) to a user. Thus the flow is the drugs. In a sex trafficking network, flow is more complex. The ``product" being sold in sex trafficking is people and a sexual experience \cite{martin2017mapping, martin2014mapping}. The commercialization and sale of sex to a sex buyer may in some ways be analogous to the sale of drugs since both are commodities. The flow within the network is not the same because people (and their labor) are not equivalent to drugs. Much more work is needed to conceptualize how this concept of ``flow" should best be applied to sex trafficking networks. Further, given the differences between drug and human trafficking, ethical considerations are also warranted as more mathematical modeling is applied to the problem domain of human trafficking.

\section*{Acknowledgements}
This material is based upon work supported by the National Science Foundation (NSF) under Grant No. 1838315 and the National Institute of Justice (NIJ) under Grant No. 2020-R2-CX-0022. The opinions expressed in the paper do not necessarily reflect the views of the NSF or NIJ. We recognize that our research cannot capture all the complexities of the lived experiences of trafficking victims and survivors. The model developed here was originally designed to explore general illicit trafficking networks, including both drug trafficking and sex trafficking networks. However, given the lack of data on sex trafficking networks and ethical considerations needed to apply OR models to sex trafficking networks, we tested the model with a comprehensive data set on drug trafficking. We would like to acknowledge Christina Melander and Emily Singerhouse for their help in understanding the complexities of sex trafficking and helping us identify the additional work and ethical considerations required in applying OR models to disrupting sex trafficking networks. 

\bibliographystyle{acm} 
\bibliography{ref.bib} 

\begin{thebibliography}{10}

\bibitem{DAD}
{\sc Alderson, D.~L., Brown, G.~G., Carlyle, W.~M., and Wood, R.~K.}
\newblock Solving defender-attacker-defender models for infrastructure defense.
\newblock In {\em 12th INFORMS Computing Society Conference, Monterey, CA,
  USA\/} (2011), INFORMS, pp.~28--4--9.

\bibitem{info}
{\sc Baycik, N.~O., Sharkey, T.~C., and Rainwater, C.~E.}
\newblock Interdicting layered physical and information flow networks.
\newblock {\em IISE Transactions 50}, 4 (2018), 316--331.

\bibitem{introduction}
{\sc Bertsimas, D., and Tsitsiklis, J.~N.}
\newblock {\em Introduction to linear optimization}, vol.~6.
\newblock Athena Scientific Belmont, MA, 1997.

\bibitem{prison}
{\sc Blumstein, A., and Beck, A.~J.}
\newblock Population growth in {US} prisons, 1980-1996.
\newblock {\em Crime and Justice 26\/} (1999), 17--61.

\bibitem{borrero2016sequential}
{\sc Borrero, J.~S., Prokopyev, O.~A., and Saur{\'e}, D.}
\newblock Sequential shortest path interdiction with incomplete information.
\newblock {\em Decision Analysis 13}, 1 (2016), 68--98.

\bibitem{borrero2019sequential}
{\sc Borrero, J.~S., Prokopyev, O.~A., and Saur{\'e}, D.}
\newblock Sequential interdiction with incomplete information and learning.
\newblock {\em Operations Research 67}, 1 (2019), 72--89.

\bibitem{bright2019illicit}
{\sc Bright, D., Koskinen, J., and Malm, A.}
\newblock Illicit network dynamics: The formation and evolution of a drug
  trafficking network.
\newblock {\em Journal of Quantitative Criminology 35}, 2 (2019), 237--258.

\bibitem{chandra2020illicit}
{\sc Chandra, Y.}
\newblock Illicit drug trafficking and financing of terrorism.
\newblock {\em Journal of Defence Studies 14}, 1-2 (2020), 69--91.

\bibitem{contardo2019progressive}
{\sc Contardo, C., and Sefair, J.~A.}
\newblock {\em A progressive approximation approach for the exact solution of
  sparse large-scale binary interdiction games}.
\newblock GERAD HEC Montr{\'e}al, 2019.

\bibitem{cormican}
{\sc Cormican, K.~J., Morton, D.~P., and Wood, R.~K.}
\newblock Stochastic network interdiction.
\newblock {\em Operations Research 46}, 2 (1998), 184--197.

\bibitem{monotone}
{\sc Fischetti, M., Ljubic, I., Monaci, M., and Sinnl, M.}
\newblock Interdiction games and monotonicity, with application to knapsack
  problems.
\newblock {\em INFORMS Journal on Computing 31\/} (2019), 390--410.

\bibitem{bilevelF}
{\sc Fischetti, M., Ljubić, I., Monaci, M., and Sinnl, M.}
\newblock A new general-purpose algorithm for mixed-integer bilevel linear
  programs.
\newblock {\em Operations Research 65}, 6 (2017), 1615--1637.

\bibitem{spipi}
{\sc Holzmann, T., and Smith, J.}
\newblock Shortest path interdiction problem with arc improvement recourse: A
  multiobjective approach.
\newblock {\em Naval Research Logistics (NRL) 66\/} (04 2019).

\bibitem{janjarassuk}
{\sc Janjarassuk, U., and Linderoth, J.}
\newblock Reformulation and sampling to solve a stochastic network interdiction
  problem.
\newblock {\em Networks: An International Journal 52}, 3 (2008), 120--132.

\bibitem{stochastic}
{\sc Lei, X., Shen, S., and Song, Y.}
\newblock Stochastic maximum flow interdiction problems under heterogeneous
  risk preferences.
\newblock {\em Computers \& Operations Research 90\/} (2018), 97--109.

\bibitem{lozano2017backward}
{\sc Lozano, L., and Smith, J.~C.}
\newblock A backward sampling framework for interdiction problems with
  fortification.
\newblock {\em INFORMS Journal on Computing 29}, 1 (2017), 123--139.

\bibitem{lozano2017value}
{\sc Lozano, L., and Smith, J.~C.}
\newblock A value-function-based exact approach for the bilevel mixed-integer
  programming problem.
\newblock {\em Operations Research 65}, 3 (2017), 768--786.

\bibitem{magliocca2019modeling}
{\sc Magliocca, N.~R., McSweeney, K., Sesnie, S.~E., Tellman, E., Devine,
  J.~A., Nielsen, E.~A., Pearson, Z., and Wrathall, D.~J.}
\newblock Modeling cocaine traffickers and counterdrug interdiction forces as a
  complex adaptive system.
\newblock {\em Proceedings of the National Academy of Sciences 116}, 16 (2019),
  7784--7792.

\bibitem{multi}
{\sc Malaviya, A., Rainwater, C., and Sharkey, T.~C.}
\newblock Multi-period network interdiction problems with applications to
  city-level drug enforcement.
\newblock {\em IIE Transactions 44}, 5 (2012), 368--380.

\bibitem{martin2017mapping}
{\sc Martin, L., Melander, C., Karnik, H., and Nakamura, C.}
\newblock {\em Mapping the demand: Sex buyers in the state of {Minnesota}}.
\newblock University of Minnesota, Robert J. Jones Urban Research and
  Outreach~…, 2017.

\bibitem{martin2014mapping}
{\sc Martin, L.~E., Pierce, A., Peyton, S., Gabilondo, A.~I., and Tulpule, G.}
\newblock {\em Mapping the Market for Sex with Minor Trafficked Girls in
  {M}inneapolis: Structures Functions, and Patterns}.
\newblock Community Report, 2014.

\bibitem{potter2009exploring}
{\sc Potter, G.}
\newblock Exploring retail-level drug distribution: Social supply, 'real'
  dealers and the user/dealer interface.
\newblock {\em Old and new policies, theories, research methods and drug users
  across Europe\/} (2009), 50--74.

\bibitem{dynamicMFNIP}
{\sc Rad, M.~A., and Kakhki, H.~T.}
\newblock Maximum dynamic network flow interdiction problem: New formulation
  and solution procedures.
\newblock {\em Computers \& Industrial Engineering 65}, 4 (2013), 531--536.

\bibitem{dynamicSPI}
{\sc Sefair, J.~A., and Smith, J.~C.}
\newblock Dynamic shortest-path interdiction.
\newblock {\em Networks 68}, 4 (2016), 315--330.

\bibitem{launder}
{\sc Shen, Y., Sharkey, T.~C., Szymanski, B., and Wallace, W.}
\newblock Interdicting interdependent contraband smuggling, money, and money
  laundering networks with uncertain information.
\newblock {\em Forthcoming in Socio-Economic Planning Sciences\/} (2020).

\bibitem{survey}
{\sc Smith, J.~C., and Song, Y.}
\newblock A survey of network interdiction models and algorithms.
\newblock {\em European Journal of Operational Research 283}, 3 (2020),
  797--811.

\bibitem{stackelberg}
{\sc Stackelberg, H.~v.}
\newblock {\em Theory of the market economy}.
\newblock Oxford University Press, 1952.

\bibitem{bilevelT}
{\sc Tahernejad, S., Ralphs, T.~K., and DeNegre, S.~T.}
\newblock A branch-and-cut algorithm for mixed integer bilevel linear
  optimization problems and its implementation.
\newblock {\em Mathematical Programming Computation\/} (2020), 1--40.

\bibitem{orig}
{\sc Wood, R.}
\newblock Deterministic network interdiction.
\newblock {\em Math. Comput. Model. 17}, 2 (Jan. 1993), 1--18.

\bibitem{cncg2}
{\sc Yue, D., Gao, J., Zeng, B., and You, F.}
\newblock A projection-based reformulation and decomposition algorithm for
  global optimization of a class of mixed integer bilevel linear programs.
\newblock {\em Journal of Global Optimization 73}, 1 (2019), 27--57.

\bibitem{cncg1}
{\sc Zeng, B., and An, Y.}
\newblock Solving bilevel mixed integer program by reformulations and
  decomposition.
\newblock {\em Optimization online\/} (2014), 1--34.

\end{thebibliography}

\newpage
\begin{appendices}
\section{Proof of Lemma}
\label{apdx:pf}

Proof of Lemma \ref{lm:noInc}:
\begin{proof}
We first show that restructuring an arc $(i,j)$ that satisfies one of the three conditions in Lemma \ref{lm:noInc} will not increase the flow. Consider an arc $(i,j)$, where $i \in \bar{U}$ (Condition 1). Since $i \in \bar{U}$, $i$ is not reachable from $s$ in $D_f$, and thus there is no path from $s$ to $i$ in $D_f$. Note that including $(i, j)$ in our network introduces the same arc $(i, j)$ in the auxiliary network $D_f$. Since there is no path from $s$ to $i$ in $D_f$, including $(i, j)$ will not complete a path from $s$ to $t$ in $D_f$. Thus, restructuring arc $(i,j)$ will not increase the value of the flow.

Next, consider an arc $(i,j)$, where $i \in U$ and $j \in U$ (Condition 2). Since $i \in U$, $i$ is reachable from $s$ in $D_f$, and thus there exists a path from $s$ to $i$ in $D_f$. Likewise, since $j \in U$, then $j$ is reachable from $s$ in $D_f$, and thus there exists a path from $s$ to $j$ in $D_f$. This implies that there cannot exist a path from $j$ to $t$ in $D_f$. If there were such a path, then we would have a path from $s$ to $t$, and thus the current flow is not optimal. Since there is no path from $j$ to $t$ in $D_f$, including arc $(i, j)$ will not complete a path from $s$ to $t$ in $D_f$. Thus, the value of the flow will not increase if this arc is introduced.

Lastly, consider an arc $(i,j)$, where $i \in U$ and $j \in \bar{U}$, and there is no path from $j$ to $t$ in $D_f$ (Condition 3). Since $i \in U$, there exists a path from $s$ to $i$ in $D_f$, and thus there is no path from $i$ to $t$ in $D_f$, else the original flow is not optimal. Since there is no path from $j$ to $t$ in $D_f$, including arc $(i, j)$ will not complete a path from $s$ to $t$ in $D_f$. Thus, the value of the flow will not increase. Thus, restructuring an arc $(i,j)$ that satisfies one of the three conditions will not increase the flow.

We now show that if restructuring an arc $(i,j)$ will not increase the flow, then it must satisfy one of the three conditions. Suppose that restructuring arc $(i,j)$ will not increase the flow. This implies that adding $(i,j)$ to $D_f$ does not complete an $s$-$t$ path. Thus, there is either no path from $s$ to $i$ in $D_f$ or no path from $j$ to $t$ in $D_f$. First, suppose there is no path from $s$ to $i$ in $D_f$. Since no such path exists, $i \in \bar{U}$, thus satisfying Condition 1. Next, suppose there is a path from $s$ to $i$ in $D_f$, meaning that $i \in U$. Then there must not be a path from $j$ to $t$ in $D_f$. We have two cases: $j \in U$ or $j \in \bar{U}$. If $j \in U$, then Condition 2 is satisfied. If $j \in \bar{U}$, since there must not be a path from $j$ to $t$ in $D_f$, Condition 3 is satisfied. Thus, if restructuring an arc $(i,j)$ will not increase the flow, then it must satisfy Conditions $1$, $2$ or $3$.
\end{proof}

\newpage
\section{Computational Results}
\label{fullComp}
Entries marked with * are instances where the method did not solve within the two hour time limit. We instead list the upper bound, as this is the best known bilevel feasible solution, and the time of last completed iteration.
\begin{singlespace}
\begin{table}[H]
 \begin{center}
  \begin{tabular}{|c|c|c|c|c|c|} 
  \hline
  Budget & 1 & 2 & 3 & 4 & 5\\ [0.5ex] 
  \hline\hline
  Base & 116.77137 & 111.89403 & 114.435134 & 110.546185 & 118.280546\\
  \hline
  50 & 77.2257 & 49.64045 & 71.548639 & 63.32806 & 81.92476 \\ 
  \hline
  60 & 64.72307 & 40.59544 & 63.378588 & 53.63989 & 71.12085 \\
  \hline
  70 & 55.44162 & 32.54936 & 54.32644 & 44.69965 & 62.40129 \\
  \hline
  80 & 46.65828 & 21.85086 & 46.196279 & 36.37513 & 53.86366 \\
  \hline
  90 & 36.38575 & 13.76413 & 38.230575 & 27.284 & 43.39701 \\
  \hline
  100 & 27.08376 & 6.3368 & 29.14647 & 18.14417 & 36.78273 \\
  \hline
  110 & 19.46085 & 0 & 21.07597 & 11.48059 & 29.61927 \\
  \hline
  120 & 12.79303 & 0 & 14.95574 & 7.04445 & 21.7783 \\
  \hline
  130 & 6.14819 & 0 & 9.099645 & 1.90682 & 14.82776 \\
  \hline
  140 & 0 & 0 & 2.844335 & 0 & 9.484119 \\
  \hline
 \end{tabular}
 \caption{Optimal flows for different base data sets with 200 users with fixed defender budget 6}
 \end{center}
\end{table}

\begin{table}[H]
 \begin{center}
  \begin{tabular}{|c|c|c|c|c|c|} 
  \hline
  Budget & 1 & 2 & 3 & 4 & 5\\ [0.5ex] 
  \hline\hline
  50 & 110.875 (3) & 10.938 (3) & 25.35 (3) & 18.86 (3) & 139.688 (3) \\ 
  \hline
  60 & 18.844 (3) & 5.672 (2) & 521.078 (8) & 23.75 (3) & 70.422 (2) \\
  \hline
  70 & 12.781 (2) & 22.968 (3) & 449.516 (7) & 51.765 (4) & 177.031 (3) \\
  \hline
  80 & 48.313 (3) & 47.078 (3) & 1316.953 (7) & 104.438 (4) & 170.422 (3) \\
  \hline
  90 & 45.203 (2) & 122.359 (3) & 688.641 (5) & 139.515 (4) & 56.063 (2) \\
  \hline
  100 & 61.188 (2) & 95.031 (3) & 69.656 (2) & 722.641 (5) & 210.328 (3) \\
  \hline
  110 & 80.078 (2) & 3.219 (2) & 118.578 (2) & 409.906 (4) & 312.594 (3) \\
  \hline
  120 & 378.562 (3) & 0.156 (1) & 90.485 (2) & 968.485 (6) & 97.468 (2) \\
  \hline
  130 & 636.907 (3) & 0.219 (1) & 1893.594 (7) & 537.984 (6) & 219.719 (3)\\
  \hline
  140 & 14.343 (2) & 0.141 (1) & 4832.516 (12) & 62.437 (4) & 758.61 (5) \\
  \hline
 \end{tabular}
 \caption{Run time in seconds (number of plans visited) for different base data sets with 200 users with fixed defender budget 6}
 \end{center}
\end{table}

\newpage
\begin{table}[H]
 \begin{center}
  \begin{tabular}{|c|c|c|c|c|c|} 
  \hline
  Budget & 1 & 2 & 3 & 4 & 5\\ [0.5ex] 
  \hline\hline
   Base & 116.77137 & 111.89403 & 114.435134 & 110.546185 & 118.280546\\
  \hline
  50 & 81.30466 & 63.862729 & 77.551249 & 69.72654 & 83.62605 \\ 
  \hline
  60 & 72.282593 & 53.256709 & 67.19034 & 61.85482 & 75.2244 \\
  \hline
  70 & 64.070003 & 44.619742 & 59.29457 & 54.50484 & 67.24456 \\
  \hline
  80 & 53.155684 & 36.102209 & 53.44689 & 47.1921 & 59.20083 \\
  \hline
  90 & 43.709545 & 27.680039 & 45.55112 & 38.78783* & 51.62168 \\
  \hline
  100 & 36.09047 & 21.568024 & 36.29383 & 30.01219 & 44.75593 \\
  \hline
  110 & 26.64771 & 12.66219 & 28.37822 & 22.45318 & 39.54587* \\
  \hline
  120 & 18.16082 & 4.58978 & 20.48245 & 14.29821 & 36.16806* \\
  \hline
  130 & 7.3016 & 0 & 15.68868 & 9.682533 & 26.62906 \\
  \hline
  140 & 0 & 0 & 6.92709 & 3.27475 & 16.56582 \\
  \hline
 \end{tabular}
 \caption{Optimal flows for different data sets with recruitment with 200 users with fixed defender budget 6}
 \end{center}
\end{table}

\begin{table}[H]
 \begin{center}
  \begin{tabular}{|c|c|c|c|c|c|} 
  \hline
  Budget & 1 & 2 & 3 & 4 & 5\\ [0.5ex] 
  \hline\hline
  50 & 55.5 (3) & 37.75 (4) & 211.891 (6) & 70.407 (4) & 161.484 (5) \\ 
  \hline
  60 & 70.172 (4) & 48.11 (4) & 62.687 (4) & 37.453 (2) & 166.766 (4) \\
  \hline
  70 & 344.578 (6) & 45.953 (3) & 114.157 (4) & 129.265 (4) & 520.172 (5) \\
  \hline
  80 & 575.875 (8) & 163.906 (4) & 1421.5 (6) & 1463.5 (7) & 2379.765 (8) \\
  \hline
  90 & 1799.312 (5) & 179.922 (5) & 3364.844 (6) & 7200 (8)* & 1892.297 (7) \\
  \hline
  100 & 1543.235 (5) & 962.062 (5) & 720.406 (4) & 2831.515 (6) & 2406.187 (7) \\
  \hline
  110 & 1288.687 (5) & 260.438 (4) & 705.375 (4) & 3586.438 (7) & 7200 (8)* \\
  \hline
  120 & 2373.344 (6) & 527.531 (5) & 851 (4) & 718.875 (3) & 7200 (9)* \\
  \hline
  130 & 161.642 (2) & 67.265 (4) & 595.157 (4) & 4260 (8) & 644.422 (4) \\
  \hline
  140 & 28.375 (2) & 0.141 (1) & 249.781 (3) & 714.016 (5) & 457.531 (3) \\
  \hline
 \end{tabular}
 \caption{Run time in seconds (number of plans visited) for different data sets with recruitment with 200 users with fixed defender budget 6}
 \end{center}
\end{table}

\newpage
\begin{table}[H]
 \begin{center}
  \begin{tabular}{|c|c|c|c|c|c|} 
  \hline
  Budget & 1 & 2 & 3 & 4 & 5\\ [0.5ex] 
  \hline\hline
  Base & 116.77137 & 111.89403 & 114.435134 & 110.546185 & 118.280546\\
  \hline
  50 & 77.2257 & 49.64045 & 71.548639 & 63.32806 & 81.92476 \\ 
  \hline
  60 & 64.72307 & 40.59544 & 63.378588 & 53.63989 & 71.12085 \\
  \hline
  70 & 55.44162 & 32.54936 & 54.32644 & 44.69965 & 62.40129 \\
  \hline
  80 & 46.65728 & 21.85086 & 46.196279 & 36.37513 & 53.86366 \\
  \hline
  90 & 36.38575 & 13.76413 & 38.230575 & 27.284 & 43.397010 \\
  \hline
  100 & 27.08376 & 6.33368 & 30.637375 & 18.14417 & 36.78273 \\
  \hline
  110 & 19.46085 & 0 & 23.122495 & 11.48059 & 29.61927 \\
  \hline
  120 & 12.79303 & 0 & 15.856735 & 7.04445 & 21.7783 \\
  \hline
  130 & 6.14819 & 0 & 9.099645 & 1.90682 & 14.82776 \\
  \hline
  140 & 0 & 0 & 2.844335 & 0 & 9.484119 \\
  \hline
 \end{tabular}
 \caption{Optimal flows for different data sets with organizational restructuring with 200 users with fixed defender budget 6}
 \end{center}
\end{table}

\begin{table}[H]
 \begin{center}
  \begin{tabular}{|c|c|c|c|c|c|} 
  \hline
  Budget & 1 & 2 & 3 & 4 & 5\\ [0.5ex] 
  \hline\hline
  50 & 93.812 (3) & 10.797 (3) & 30.39 (3) & 14.938 (3) & 91.469 (3) \\ 
  \hline
  60 & 18.438 (3) & 6.453 (2) & 735.063 (8) & 18.828 (3) & 55.578 (2) \\
  \hline
  70 & 11.328 (2) & 27.547 (3) & 448.094 (7) & 50.515 (4) & 178.766 (3) \\
  \hline
  80 & 46.109 (3) & 49.735 (3) & 1185.343 (7) & 116.766 (4) & 168.125 (3) \\
  \hline
  90 & 48.031 (2) & 119.812 (3) & 644.937 (5) & 162.078 (4) & 56.422 (2) \\
  \hline
  100 & 49.984 (2) & 114.078 (3) & 1669.125 (7) & 706.016 (5) & 234.938 (3) \\
  \hline
  110 & 80.125 (2) & 4.688 (2) & 1906.25 (7) & 407.812 (4) & 338.156 (3) \\
  \hline
  120 & 424.891 (3) & 0.125 (1) & 1782.188 (7) & 981.297 (6) & 131.39 (2) \\
  \hline
  130 & 150.859 (2) & 2.063 (3) & 3860.046 (7) & 437.812 (6) & 564.625 (3) \\
  \hline
  140 & 197.172 (3) & 0.125 (1) & 3247.78 (10) & 17.328 (4) & 892.281 (4) \\
  \hline
 \end{tabular}
 \caption{Run time in seconds (number of plans visited) for different data sets with organizational restructuring with 200 users with fixed defender budget 6}
 \end{center}
\end{table}

\newpage
\begin{table}[H]
 \begin{center}
  \begin{tabular}{|c|c|c|c|c|c|} 
  \hline
  Budget & 1 & 2 & 3 & 4 & 5\\ [0.5ex] 
  \hline\hline
 Base & 116.77137 & 111.89403 & 114.435134 & 110.546185 & 118.280546\\
  \hline
  50 & 87.99789 & 69.30883 & 85.118192 & 71.85268 & - \\ 
  \hline
  60 & 74.99896 & 60.12165 & 66.37184 & 60.58043 &  101.51818\\
  \hline
  70 & 64.18631 & 49.64045 & 54.87111 & 51.50233 & 85.87106 \\
  \hline
  80 & 55.52443 & 32.50457 & 47.2207 & 43.40857 & 75.35971 \\
  \hline
  90 & 44.27122 & 18.67813 & 38.47231 & 33.37398 & 62.93186 \\
  \hline
  100 & 33.4231 & 10.19944 & 29.14647 & 25.4703 & 51.48153 \\
  \hline
  110 & 25.27449 & 3.26572 & 21.07597 & 17.65032 & 42.37629 \\
  \hline
  120 & 15.62627 & 0 & 14.95574 & 10.339423 & 33.52178 \\
  \hline
  130 & 6.5985 & 0 & 9.2884 & 5.91753 & 23.42954 \\
  \hline
  140 & 0 & 0 & 3.08835 & 1.90682 & 16.44772 \\
  \hline
 \end{tabular}
 \caption{Optimal flows for different base data sets with interdicting leadership with 200 users with fixed defender budget 6}
 \end{center}
\end{table}

\begin{table}[H]
 \begin{center}
  \begin{tabular}{|c|c|c|c|c|c|} 
  \hline
  Budget & 1 & 2 & 3 & 4 & 5\\ [0.5ex] 
  \hline\hline
  50 & 0.671 (1) & 0.656 (1) & 4.188 (2) & 6.672 (3) & - \\ 
  \hline
  60 & 4.766 (2) & 1.437 (1) & 5.468 (2) & 10.359 (3) & 3.813 (2) \\
  \hline
  70 & 1.84 (1) & 36.157 (3) & 4.438 (2) & 29.563 (4) & 4.234 (2) \\
  \hline
  80 & 17.64 (2) & 2.031 (1) & 17.062 (2) & 71.203 (3) & 36.266 (4) \\
  \hline
  90 & 54.625 (2) & 2.218 (1) & 176.64 (3) & 60.469 (3) & 25.796 (2) \\
  \hline
  100 & 59.375 (2) & 39.797 (2) & 34.672 (2) & 706.828 (5) & 102.11 (3) \\
  \hline
  110 & 79.641 (2) & 21.5 (2) & 167.219 (2) & 555.609 (5) & 47.375 (2) \\
  \hline
  120 & 150.265 (2) & 3.031 (2) & 119.984 (2) & 273.141 (4) & 123.75 (2) \\
  \hline
  130 & 163.765 (2) & 0.203 (1) & 153.328 (2) & 350.984 (4) & 206.25 (2) \\
  \hline
  140 & 11.094 (3) & 0.141 (1) & 148.078 (3) & 722.672 (7) & 111.968 (2) \\
  \hline
 \end{tabular}
 \caption{Run time in seconds (number of plans visited) for different base data sets with interdicting leadership with 200 users with fixed defender budget 6}
 \end{center}
\end{table}

\newpage
\begin{table}[H]
 \begin{center}
  \begin{tabular}{|c|c|c|c|c|c|} 
  \hline
  Budget & 1 & 2 & 3 & 4 & 5\\ [0.5ex] 
  \hline\hline
  Base & 116.77137 & 111.89403 & 114.435134 & 110.546185 & 118.280546\\
  \hline
  50 & 98.795814 & 69.30883 & 85.118192 & 84.03023 & - \\ 
  \hline
  60 & 86.14281 & 60.12165 & 74.435869 & 71.85268 & 101.51818 \\
  \hline
  70 & 77.2257 & 49.64045 & 65.118192 & 60.58043 & 91.52576 \\
  \hline
  80 & 64.723070 & 40.59544 & 54.435869 & 51.50233 & 78.08339 \\
  \hline
  90 & 55.441619 & 32.54936 & 44.666794 & 38.18203 & 67.50571 \\
  \hline
  100 & 46.65828 & 21.85086 & 36,691006 & 29.37457 & 57.97674 \\
  \hline
  110 & 36.38575 & 13.76413 & 29.14647 & 20.19363 & 47.28458 \\
  \hline
  120 & 27.08376 & 6.33368 & 21.07597 & 14.29806 & 38.26267 \\
  \hline
  130 & 19.46085 & 0 & 14.95574 & 7.04445 & 30.4055 \\
  \hline
  140 & 12.79303 & 0 & 8.972234 & 3.27475 & 22.50035 \\
  \hline
 \end{tabular}
 \caption{Optimal flows for different data sets with organizational restructuring and interdicting leadership with 200 users with fixed defender budget 6}
 \end{center}
\end{table}

\begin{table}[H]
 \begin{center}
  \begin{tabular}{|c|c|c|c|c|c|} 
  \hline
  Budget & 1 & 2 & 3 & 4 & 5\\ [0.5ex] 
  \hline\hline
  50 & 74.656 (5) & 1.156 (1) & 4.406 (2) & 26.25 (5) & - \\ 
  \hline
  60 & 17.782 (3) & 1.984 (1) & 50.344 (6) & 67.969 (5) & 4.812 (2) \\
  \hline
  70 & 375.093 (6) & 44.64045 (4) & 68.75 (6) & 61.969 (5) & 606.36 (6) \\
  \hline
  80 & 139.875 (5) & 29.015 (2) & 21.406 (3) & 154.484 (6) & 285.156 (6) \\
  \hline
  90 & 102.719 (3) & 783.141 (6) & 52.782 (3) & 244.438 (7) & 412.516 (6) \\
  \hline
  100 & 1357.375 (6) & 573.609 (5) & 559.437 (5) & 514.016 (5) & 1192.407 (6) \\
  \hline
  110 & 840.975 (5) & 824.36 (6) & 1401.797 (7) & 507 (5) & 885.39 (5) \\
  \hline
  120 & 1600.578 (6) & 926.594 (6) & 625.078 (5) & 2963.718 (8) & 570.11 (4) \\
  \hline
  130 & 1661.359 (6) & 9.328 (3) & 1954.64 (7) & 925.922 (5) & 3720.812 (7) \\
  \hline
  140 & 2580.844 (7) & 0.125 (1) & 2745.125 (9) & 1535.969 (9) & 2004.015 (5) \\
  \hline
 \end{tabular}
 \caption{Run time in seconds (number of plans visited) for different data sets with organizational restructuring and interdicting leadership with 200 users with fixed defender budget 6}
 \end{center}
\end{table}
\end{singlespace}

\end{appendices}
\end{document}